\date{}
\newcommand{\ep}{\varepsilon}
\newcommand{\re}{\mathbb{R}}
\newcommand{\z}{\mathbb{Z}}
\newcommand{\ul}{u_{\lambda}}
\newcommand{\uxi}{\widehat{u}_\xi}
\newtheorem{thm}{Theorem}[section]
\newtheorem{rmk}[thm]{Remark}
\newtheorem{prop}[thm]{Proposition}
\newtheorem{lemma}[thm]{Lemma}
\title{The model example of wave equation with oscillating scale-invariant damping}
\author{Marina Ghisi\vspace{1ex}\\ 
{\normalsize Università degli Studi di Pisa} \\
{\normalsize Dipartimento di Matematica}\\ 
{\normalsize PISA (Italy)}\\
{\normalsize e-mail: \texttt{marina.ghisi@unipi.it}}
\and
Massimo Gobbino\vspace{1ex}\\ 
{\normalsize Università degli Studi di Pisa} \\
{\normalsize Dipartimento di Matematica}\\ 
{\normalsize PISA (Italy)}\\  
{\normalsize e-mail: \texttt{massimo.gobbino@unipi.it}}
}
\begin{document}
\maketitle

\begin{abstract}

We analyze a simple example of wave equation with a time-dependent damping term, whose coefficient decays at infinity at the scale-invariant rate and includes an oscillatory component that is integrable but not absolutely integrable.

We show that the oscillations in the damping coefficient induce a resonance effect with a fundamental solution of the elastic term, altering the energy decay rate of solutions. In particular, some solutions exhibit slower decay compared to the case without the oscillatory component.

Our proof relies on Fourier analysis and a representation of solutions in polar coordinates, reducing the problem to a detailed study of the asymptotic behavior of solutions to a family of ordinary differential equations and suitable oscillatory integrals.

\vspace{6ex}

\noindent{\bf Mathematics Subject Classification 2020 (MSC2020):} 
35L20, 35B40, 35B34, 35L90.

		
\vspace{6ex}

\noindent{\bf Key words:} 
dissipative wave equation, 
scale-invariant damping, 
decay rate, 
resonance, 
oscillatory integral.

\end{abstract}

 
\section{Introduction}

In this paper we consider the damped wave equation
\begin{equation}
    u_{tt}(t,x)+\frac{m+r\cos(2t)}{t}\cdot u_t(t,x)-\Delta u(t,x)=0
    \qquad
    t\geq t_0,\quad x\in\re^d,
    \label{eqn:basic}
\end{equation}
where $d$ is a positive integer, and $t_0$, $m$, $r$ are positive real numbers. We consider the usual class of weak solutions
\begin{equation}
    u\in C^0\left([t_0,+\infty);H^1(\re^d)\right)\cap C^1\left([t_0,+\infty);L^2(\re^d)\right),
    \nonumber
\end{equation}
and we investigate the long-time behavior of their energy, given by
\begin{equation}
    \mathcal{E}_u(t):=\int_{\re^d}u_t(t,x)^2\,dx+\int_{\re^d}|\nabla u(t,x)|^2\,dx.
    \label{defn:energy-concrete}
\end{equation}

More precisely, we analyze the decay rate of the function defined as
\begin{equation}
    \mathcal{D}(t):=\sup\left\{\mathcal{E}_u(t):
    \text{$u$ is a solution to (\ref{eqn:basic}) and } 
    \mathcal{E}_u(t_0)+\int_{\re^d}u(t_0,x)^2\,dx\leq 1\right\},
    \label{defn:Energy-sup}
\end{equation}
which represents, for every $t\geq t_0$, the best constant for which the estimate
\begin{equation}
    \mathcal{E}_u(t)\leq
    \mathcal{D}(t)\cdot\left(\mathcal{E}_u(t_0)+\int_{\re^d}u(t_0,x)^2\,dx\right)
    \nonumber
\end{equation}
holds true for every solution $u$ to (\ref{eqn:basic}).

Equation (\ref{eqn:basic}) has been referenced multiple times over the past 20 years as the fundamental example that appeared to evade all known techniques. Indeed, the damping coefficient exhibits both of the key challenging features: a decay rate of order $1/t$ at infinity (known as the scale-invariant rate), and an oscillatory term that is integrable but not absolutely integrable at infinity.

\paragraph{\textmd{\textit{Some background}}}

The decay of solutions to damped wave equations of the form
\begin{equation}
    u_{tt}(t,x)+b(t)\cdot u_t(t,x)-\Delta u(t,x)=0
    \nonumber
\end{equation}
has been extensively studied since the 1970s, beginning with the model cases where 
\begin{equation}
    b(t)=\frac{m}{t^p}
    \qquad\text{(with $m>0$ and $p\in\re$)}.
    \nonumber
\end{equation}

With this choice, the function $\mathcal{D}(t)$ defined in (\ref{defn:Energy-sup}) exhibits the following asymptotic behavior, up to multiplicative constants (see~\cite{1976-Kyoto-Matsumura,1977-PJA-Matsumura,1980-Kyoto-Uesaka,2004-M2AS-Wirth}):
\begin{itemize}
    \item  it decays as $1/\log t$ if $p=-1$;

    \item  it decays as $1/t^{p+1}$ if $p\in(-1,1)$;

    \item  it decays as $1/t^{\min\{m,2\}}$ if $p=1$;

    \item  it does not decay to 0 if either $p>1$ or $p<-1$.

\end{itemize}

We observe that the decay rate is maximum when $b(t)=2/t$, while solutions do not decay at all both when the damping is too small, and when the damping is too large (which case is usually referred to as overdamping). We observe also that the value of the coefficient $m$ is relevant only when $p=1$, which represents the critical, and hence more delicate, case.

These results have been extended by J.~Wirth~\cite{2006-JDE-Wirth-NE,2007-JDE-Wirth-E}, who considered more general damping coefficients, and introduced the classification into effective and non-effective models. Roughly speaking, the general idea that emerges is that $b(t)=2/t$ represents a sort of threshold. 
\begin{itemize}
    \item When $b(t)\leq 2/t$ equation (\ref{eqn:basic}) maintains its hyperbolic nature. This is the so-called non-effective (or hyperbolic) regime, in which one expects an estimate of the form
    \begin{equation}
    \mathcal{D}(t)\lesssim\exp\left(-\int_{t_0}^t b(s)\,ds\right),
    \label{decay:hyp}
    \end{equation}
    and actually all solutions decay as the right-hand side of (\ref{decay:hyp}) and there is also a scattering theory to solutions to the undamped equation (see~\cite[Result~2]{2006-JDE-Wirth-NE} for the details). In this regime more damping yields more decay. 

    \item When $b(t)\geq 2/t$ equation (\ref{eqn:basic}) behaves more like a parabolic equation, and the so-called diffusion phenomenon appears (see~\cite{1976-Kyoto-Matsumura,1997-JDE-Nishihara,2011-JDE-RaduTodoYord} and~\cite[Theorem~27]{2007-JDE-Wirth-E}). This is the effective (or parabolic) regime, in which one expects an estimate of the form
    \begin{equation}
        \mathcal{D}(t)\lesssim\left[\int_{t_0}^t \frac{1}{b(s)}\,ds\right]^{-1},
        \label{decay:par}
    \end{equation}
    whose optimality is determined by the frequencies close to~0, meaning that the supremum in (\ref{defn:Energy-sup}) is realized by solutions concentrated on smaller and smaller frequencies. In this regime more damping yields less decay.
\end{itemize}

The general philosophy is that results of this type can be established whenever the coefficient $b(t)$ is ``well-behaved'', meaning that it remains on either side of the threshold $2/t$ and satisfies appropriate conditions that restrict its oscillatory behavior (see~\cite{2006-JDE-Wirth-NE,2007-JDE-Wirth-E,2008-JMAA-HirosawaWirth,2020-AsymptAn-VargasDaLuz,2023-AsymptAn-AslanEbert}).

On the other hand, in the absence of any control over oscillations, more complex phenomena can arise. For example, in~\cite{GGH-2016-SIAM} it is shown that, at least in the case of wave or beam equations in bounded domains, one can achieve more than exponential decay rates by exploiting suitable pulsating damping coefficients.

\paragraph{\textmd{\textit{Related results for scale-invariant damping}}}

Let us mention some results for the scale-invariant regime that are relevant to our presentation.

\begin{enumerate}

\item  (\cite[Example~3]{2023-AsymptAn-AslanEbert}) Solutions decay as prescribed by (\ref{decay:par}), namely as $1/t^{2}$, when
\begin{equation}
\frac{m-r}{t}\leq b(t)\leq\frac{m+r}{t}
\qquad\quad
m>2,
\quad
0<r\ll m-2.
\nonumber
\end{equation}

In this case the damping coefficient falls into the effective regime. Fast oscillations of the same order of the principal part are allowed, but their amplitude is required to be small.

\item  (\cite[Example~1]{2023-AsymptAn-AslanEbert}) Solutions decay as prescribed by (\ref{decay:hyp}), namely as $1/t^{m}$, when
\begin{equation}
\frac{m}{t}-\frac{1}{t\log^{\gamma}t}\leq b(t)\leq\frac{m}{t}+\frac{1}{t\log^{\gamma}t}
\qquad\quad
m\in(0,2),
\quad
\gamma>1.
\nonumber
\end{equation}

In this case the damping coefficient falls into the non-effective regime. Oscillations can be very fast, but they are a lower order term and, more important, this term is \emph{absolutely} integrable at infinity because of the condition $\gamma>1$.

\item  (\cite[Example~3.1]{2008-JMAA-HirosawaWirth}) Solutions decay as prescribed by (\ref{decay:hyp}), namely as $1/t^{m}$, when
\begin{equation}
b(t):=\frac{m(1+\sin(t^{\alpha}))}{t}
\qquad\quad
m\in(0,1/2),
\quad
\alpha\in(0,1).
\label{hp:slow}
\end{equation}

Again the damping coefficient falls into the non-effective regime, and actually it is far from the threshold $2/t$. Its oscillations have the same order as the principal part, and are not absolutely integrable at infinity.

\item  (\cite[Theorem~2.5]{2024-JDE-GG}) Solutions decay as prescribed by (\ref{decay:hyp}) and (\ref{decay:par}), namely as $1/t^{\min\{m,2\}}$, when
\begin{equation}
b(t):=\frac{m+r\sin(t^{\alpha}))}{t}
\qquad\quad
m>0,
\quad
|r|\leq m,
\quad
\alpha>1.
\label{hp:fast}
\end{equation}

Now the damping coefficient is not forced to lie on one side of the threshold $2/t$. Again its oscillations have the same order as the principal part, and are not absolutely integrable at infinity. For further results in the same spirit, we refer also to~\cite[Theorem~2.1 and Example~2.5]{2025-JMAA-GotoHirosawa}.

\item  (\cite[Theorem~2.4]{2024-JDE-GG}) For every pair of real numbers $m$ and $r$ satisfying $m\geq r>0$, there exists a function $\eta:[1+\infty)\to\re$ of class $C^\infty$ such that the integral
\begin{equation}
    \int_1^{+\infty}\frac{\cos(\eta(s))}{s}\,ds
    \nonumber
\end{equation}
is convergent, and nevertheless, when in (\ref{eqn:basic}) we choose the damping coefficient
\begin{equation}
b(t):=\frac{m+r\cos(\eta(t))}{t},
\label{hp:medium}
\end{equation}
the function $\mathcal{D}(t)$ decays at most as $1/t^{m-r/2}$. In particular, when $m-r/2\in(0,2)$ there are solutions that decay less than what prescribed by (\ref{decay:hyp}) or (\ref{decay:par}). The function $\eta(t)$ satisfies
\begin{equation}
    \eta(t)=t+O(\log t)
    \qquad
    \text{as }t\to +\infty,
    \nonumber
\end{equation}
and therefore the oscillations in (\ref{hp:medium}) have a frequency that corresponds approximately to the limit case $\alpha=1$ in (\ref{hp:slow}) and (\ref{hp:fast}).

\end{enumerate}

These examples seem to suggest that oscillations that are absolutely integrable do not alter the decay rate provided by either (\ref{decay:hyp}) or (\ref{decay:par}). Oscillations that are integrable, but not absolutely integrable, again do not alter the decay rate when they are ``too slow'' as in (\ref{hp:slow}) or ``too fast'' as in (\ref{hp:fast}). But when they oscillate with the ``correct frequency'', they create a resonance effect with the fundamental solutions of the elastic part, yielding a degradation of the decay rate. This effect is the opposite of what obtained in~\cite{GGH-2016-SIAM}, where the resonance was exploited in order to improve the decay rate.

\paragraph{\textmd{\textit{Our contribution}}}

The example provided in~\cite[Theorem~2.4]{2024-JDE-GG} was the first to show that oscillations can modify the expected decay rate of solutions in a scale-invariant regime. However, the construction of $\eta(t)$ given therein is somewhat implicit, leaving open the question of whether the same effect could occur with a simpler damping term as in (\ref{eqn:basic}).

In this paper we provide an affirmative answer. Our main result is the following.

\begin{thm}[Deterioration of the decay rate -- Model case]\label{thm:main}

For every choice of the positive real numbers $t_0$, $m$ and $r$, there exists a positive constant $C_1(t_0,m,r)$ such that the function (\ref{defn:Energy-sup}) for solutions to (\ref{eqn:basic}) satisfies
\begin{equation}
    \mathcal{D}(t)\geq\frac{C_1(t_0,m,r)}{t^{m-r/2}}
    \qquad
    \forall t\geq t_0.
    \label{th:main}
\end{equation}

\end{thm}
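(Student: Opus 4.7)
}}
The plan is to reduce the problem to a family of ODEs indexed by the Fourier frequency $\xi$, identify a parametric resonance at $|\xi|=1$ between the oscillating damping $r\cos(2t)/t$ and the natural frequency of the elastic part, exhibit one solution growing like $t^{r/4}$ (after extracting the scale-invariant decay $t^{-m/2}$), and finally build Fourier-localized initial data that activates this growing mode. Since $\mathcal{D}(T)$ is defined as a supremum, for each $T\ge t_0$ it suffices to construct a single normalized solution $u_T$ satisfying $\mathcal{E}_{u_T}(T)\geq C_1\,T^{-(m-r/2)}$.

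First, applying the Fourier transform in $x$ yields the parametrized ODE
\[
    \widehat{u}_{tt}+\frac{m+r\cos(2t)}{t}\widehat{u}_t+|\xi|^2\widehat{u}=0.
\]
Setting $\widehat{u}=t^{-m/2}\widehat{v}$ absorbs the principal $m/t$ term and produces for $\widehat v$ an equation with damping coefficient $r\cos(2t)/t$ and a potential of order $1/t^2$. A further gauge change $\widehat{v}=e^{\alpha(t)}\widehat{w}$ with $\alpha(t)=-\frac{r}{2}\int^{t}\!\cos(2s)/s\,ds$ (which is bounded, since the integral is conditionally convergent) eliminates the remaining first-order term and reduces everything to a Schr\"odinger-type equation
\[
    \widehat{w}_{tt}+\Bigl(|\xi|^2+\frac{r\sin(2t)}{t}+O(1/t^2)\Bigr)\widehat{w}=0.
\]
This is a Mathieu-like problem in which the perturbation $r\sin(2t)/t$ forces at frequency $2$, exactly twice the natural frequency when $|\xi|=1$, the hallmark of parametric resonance.

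Second, at $|\xi|=1$ write $\widehat{w}=A(t)\cos t+B(t)\sin t$ with the usual variation-of-parameters constraint $\dot A\cos t+\dot B\sin t=0$. Expanding the resulting system with trigonometric identities and separating the constant (non-oscillatory) parts from quantities like $\cos(2t)/t$, $\sin(4t)/t$, $\cos(4t)/t$, whose primitives are bounded by van der Corput-type estimates, one obtains the averaged system
\[
    \dot A=\frac{r}{4t}A+\text{(bounded-primitive terms)},
    \qquad
    \dot B=-\frac{r}{4t}B+\text{(bounded-primitive terms)}.
\]
A contraction argument applied to the integral formulation in the variables $a(t):=A(t)t^{-r/4}$, $b(t):=B(t)t^{r/4}$ produces, for large enough $t$, a solution with $a(t)\to c_1\neq 0$ and $b(t)\to c_2$. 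The corresponding $\widehat{w}(t)\sim c_1 t^{r/4}\cos t$ grows precisely at the predicted rate, yielding $\widehat{u}(t,\xi)\sim t^{-m/2+r/4}$ at $|\xi|=1$ and hence an energy density of order $t^{-(m-r/2)}$.

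Third, for each $T\ge t_0$ choose Cauchy data $(\widehat{u}(t_0,\cdot),\widehat{u}_t(t_0,\cdot))$ supported in a spherical shell $\{|\xi|-1|<\delta_T\}$ with $\delta_T$ of order $1/T$, and with the Fourier phase aligned (pointwise in $\xi$) with the growing eigendirection identified in Step~2. Rotational invariance of the ODE in $|\xi|$ makes this construction consistent with measurable initial data; the normalization $\mathcal{E}_u(t_0)+\|u(t_0)\|_{L^2}^2\leq 1$ determines the amplitude, and the condition $\delta_T\lesssim 1/T$ guarantees that the detuning $|\xi|^2-1$ does not destroy the resonance before time $T$. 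A comparison of the ODE with its $|\xi|=1$ counterpart then gives $\mathcal{E}_{u_T}(T)\geq C_1(t_0,m,r)\,T^{-(m-r/2)}$, proving \eqref{th:main}.

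The main obstacle is Step~2: the oscillatory remainders in the system for $(A,B)$ are only conditionally integrable, so one must iterate integration by parts or use appropriate averaging to upgrade "bounded primitives" into genuine error bounds compatible with the fixed-point argument producing the growing mode. A secondary, but more routine, difficulty lies in Step~3, where the dependence of the ODE solution on the parameter $\nu=|\xi|$ must be controlled uniformly on the shell $\{|\nu-1|<\delta_T\}$ in order to turn a pointwise-in-$\xi$ growth into an integrated energy bound.
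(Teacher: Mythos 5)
Your overall strategy — reduce to an ODE at frequency $1$, detect a parametric resonance between $\cos(2t)/t$ and the natural oscillation $e^{\pm it}$, extract the $t^{-(m-r/2)}$ rate, and then localize in a Fourier shell — is the same high-level story as the paper, but the technical route is genuinely different. The paper does not perform the WKB-type gauge changes $\widehat{u}=t^{-m/2}\widehat{v}$, $\widehat{v}=e^{\alpha(t)}\widehat{w}$ to reach a Mathieu normal form. Instead it writes $(v,v')=(\rho\cos\theta,-\rho\sin\theta)$ directly for the original ODE, which \emph{triangularizes} the problem: $\theta$ satisfies a closed scalar first-order equation, and once $\theta(t)=t+o(1)$ is known, $\rho$ is given by the explicit integral formula (\ref{eqn:rho-int}). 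That triangular structure is precisely what the paper exploits, together with the sign-structured Gronwall variant Lemma~\ref{lemma:zg}, to absorb the conditionally integrable remainders uniformly in $r>0$. The oscillatory-integral ingredient you invoke corresponds to the paper's Lemma~\ref{lemma:osc-int}. Your Step~3 is the concrete content of the $\geq$ inequality in Proposition~\ref{prop:ode2pde}, which the paper treats abstractly by citing the concentration construction from~\cite{2024-JDE-GG}; that part of your plan is routine and matches.

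The genuine gap is exactly where you suspected, in Step~2, and I want to pinpoint why the fix is not merely ``iterate integration by parts''. After rescaling $a=At^{-r/4}$, $b=Bt^{r/4}$, the system becomes, schematically,
\begin{equation}
 \dot a=\frac{g_1(t)}{t}\,a+\frac{g_2(t)}{t^{1+r/2}}\,b,
 \qquad
 \dot b=\frac{g_3(t)}{t}\,b+\frac{g_4(t)}{t^{1-r/2}}\,a,
 \nonumber
\end{equation}
with $g_i$ trigonometric polynomials. The coupling term in $\dot b$ has the form $\cos(\text{integer}\cdot t)/t^{1-r/2}$ times a bounded factor. When $r\ge 2$ this is not even an improperly convergent integral (Lemma~\ref{lemma:osc-int} requires $\alpha>0$, i.e.\ $r<2$), so the claimed limit $b(t)\to c_2$ is simply false in that range and the fixed point cannot close as stated. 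Even for $r<2$, the equation for $b$ has no sign structure (the diagonal term $g_3/t$ is oscillating, not nonnegative), so you cannot invoke a Lemma~\ref{lemma:zg}-type argument, and a contraction on $\sup|a|+\sup|b|$ based on conditionally convergent integrals is delicate: the ``bounded primitive'' of $g_1/t$ over $[t_0,t]$ is $O(1)$, not small, so the Lipschitz constant of the iteration map is not obviously below~$1$. None of this is fatal to the \emph{conclusion} (one can likely show $B=o(A)$ and still get $\widehat w\sim c_1 t^{r/4}\cos t$), but the argument as written does not prove it. The paper's polar decomposition avoids all of these issues at once: there is no coupling, the relevant scalar equation for $\varphi(t)=2(t-\theta(t))$ has the favorable sign $-\frac{r}{2}\sin\varphi/t$ near the stable equilibrium, and Lemma~\ref{lemma:zg} then gives the needed uniform bound for every $r>0$. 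You should either redo Step~2 in the paper's polar variables, or restrict to $r<2$ and work out an honest fixed-point argument that quantifies the conditional integrability (e.g.\ via a double integration by parts to reduce to absolutely integrable kernels), and in the latter case explain how to treat $r\ge2$.
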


\begin{rmk}
\begin{em}

A careful inspection of the proof reveals that the result holds for any choice of $m$ and $r$, regardless of their sign. However, the case where the estimate is most significant is when $0<r<m$ and $m - r/2 \in (0,2)$. Indeed, in this range, (\ref{decay:hyp}) and (\ref{decay:par}) would predict a decay rate proportional to $1/t^{\min\{m,2\}}$, whereas (\ref{th:main}) ensures that the decay rate is necessarily worse. 

Moreover, in this range, it is possible to show that $\mathcal{D}(t)$ is actually proportional to $1/t^{m-r/2}$ (see also~\cite[Remark~2.7]{2024-JDE-GG}). 

\end{em}

\end{rmk}

In our main result we focused on the basic example for the sake of simplicity. However, the result is more general, and applies to abstract wave equations of the form
\begin{equation}
    u''(t)+b(t)u'(t)+Au(t)=0
    \label{eqn:abstract}
\end{equation}
where $A$ is a nonnegative multiplication operator in some Hilbert space $H$, and the damping coefficient is of the form
\begin{equation}
    b(t):=\frac{\beta(t)}{t}
    \label{defn:b-gamma}
\end{equation}
for some periodic function $\beta(t)$. The key assumption is that $\beta(t)$ resonate with at least one of the fundamental frequencies of the elastic operator $A$. We refer to Section~\ref{sec:extensions} for the details. 

\paragraph{\textmd{\textit{Overview of the technique}}}

Let us give a sketch of the proof of Theorem~\ref{thm:main}.

By Fourier analysis, it is well known that the decay rate of solutions to (\ref{eqn:abstract}) is closely linked to the decay rate of solutions to the family of ordinary differential equations
\begin{equation}
\ul''(t) + b(t) \ul'(t) + \lambda^2 \ul(t) = 0,
\label{eqn:ode-b(t)}
\end{equation}
parametrized by the real number $\lambda\geq 0$. The precise relationship is established in Proposition~\ref{prop:ode2pde} below.

In particular, for the special case of (\ref{eqn:basic}), it suffices to consider $\lambda=1$, reducing the proof of Theorem~\ref{thm:main} to the following.

\begin{prop}[Deterioration of decay for one Fourier component]\label{prop:main}

For every choice of the positive real numbers $t_0$, $m$ and $r$, there exist a positive constant $C_2(t_0,m,r)$, and a non-zero solution $v(t)$ to the ordinary differential equation
\begin{equation}
    v''(t)+\frac{m+r\cos(2t)}{t}\cdot v'(t)+v(t)=0,
    \label{eqn:ode}
\end{equation}
such that
\begin{equation}
    v'(t)^2+v(t)^2\geq 
    \left(v'(t_0)^2+v(t_0)^2\right)\frac{C_2(t_0,m,r)}{t^{m-r/2}}
    \qquad
    \forall t\geq t_0.
    \label{th:ode-below}
\end{equation}

\end{prop}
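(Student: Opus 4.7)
My approach is to analyze (\ref{eqn:ode}) in polar coordinates and isolate the resonant Fourier mode in the damping that drives the anomalous decay. Setting $v(t)=\rho(t)\sin\theta(t)$ and $v'(t)=\rho(t)\cos\theta(t)$, a direct calculation transforms (\ref{eqn:ode}) into the first-order system
\begin{equation}
\theta'(t)=1+\frac{m+r\cos(2t)}{2t}\sin(2\theta(t)),
\qquad
(\log\rho)'(t)=-\frac{m+r\cos(2t)}{t}\cos^{2}\theta(t),
\nonumber
\end{equation}
and since $v(t)^{2}+v'(t)^{2}=\rho(t)^{2}$, proving (\ref{th:ode-below}) reduces to exhibiting a single solution for which $\rho(t)^{2}$ stays above $C\rho(t_{0})^{2}/t^{m-r/2}$.

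Introducing the deviation $\varphi(t):=\theta(t)-t$ and expanding both equations via the standard product-to-sum identities $\sin(2t)\cos(2t)=\sin(4t)/2$, $\cos^{2}(2t)=(1+\cos(4t))/2$, and $\cos(2t)\cos(2t+2\varphi)=[\cos(2\varphi)+\cos(4t+2\varphi)]/2$ separates the resonant from the non-resonant contributions. After regrouping, the system takes the schematic form
\begin{equation}
\varphi'(t)=\frac{r\sin(2\varphi(t))}{4t}+\frac{R(t,\varphi(t))}{t},
\qquad
(\log\rho)'(t)=-\frac{m}{2t}-\frac{r\cos(2\varphi(t))}{4t}+\frac{S(t,\varphi(t))}{t},
\nonumber
\end{equation}
where $R$ and $S$ are explicit finite sums of elementary terms of the form $\cos(kt+\ell\varphi)$ or $\sin(kt+\ell\varphi)$ with $k\in\{2,4\}$ and $\ell\in\z$. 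The essential observation is that the resonance between $\cos(2t)$ in the damping and the frequency of $v$ produces the purely slow terms $r\sin(2\varphi)/(4t)$ and $-r\cos(2\varphi)/(4t)$, whereas every other contribution in $R/t$ and $S/t$ retains a nonvanishing oscillatory phase.

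The main step, and the one I expect to be the most delicate, is the construction of a particular solution whose phase satisfies $\varphi(t)\to\pi/2\pmod{\pi}$ as $t\to+\infty$, so that $\cos(2\varphi(t))\to -1$. At the level of the averaged equation $\varphi'=r\sin(2\varphi)/(4t)$, or equivalently, after the time change $\tau=\log t$, of the autonomous system $d\varphi/d\tau=r\sin(2\varphi)/4$, the point $\varphi_{*}=\pi/2$ is a hyperbolic attractor when $r>0$ and a hyperbolic saddle when $r<0$; in both cases it admits a nontrivial stable manifold. I would realize this via a Banach fixed-point argument on $[T,+\infty)$ for $T$ sufficiently large, writing the phase equation in integral form with the terminal condition $\varphi(+\infty)=\pi/2$, and handling the oscillatory forcing $R/t$ by integration by parts: since $\varphi'=O(1/t)$, each phase $kt+\ell\varphi(t)$ has derivative bounded away from zero for $T$ large, so its antiderivative gains an extra factor $1/t$ and becomes absolutely integrable. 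The solution thus produced on $[T,+\infty)$ extends uniquely backward to a nontrivial solution of (\ref{eqn:ode}) on $[t_{0},+\infty)$, along which $\rho$ is strictly positive.

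The last step is a direct integration of the amplitude equation. The same integration by parts shows that every oscillatory contribution in $S$ has uniformly bounded integral on $[t_{0},t]$, while the convergence $\cos(2\varphi(t))\to -1$ turns the resonant term into $-\int_{t_{0}}^{t}r\cos(2\varphi(s))/(4s)\,ds=(r/4)\log t+O(1)$. Combining these with the principal $-m/(2t)$ contribution yields
\begin{equation}
\log\rho(t)=-\frac{m-r/2}{2}\log t+O(1)
\qquad\text{as }t\to+\infty,
\nonumber
\end{equation}
hence $\rho(t)^{2}\geq c\,t^{-(m-r/2)}$ for some positive constant $c=c(t_{0},m,r)$ and for all sufficiently large $t$. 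The continuity and strict positivity of $\rho$ on any compact interval $[t_{0},T]$ take care of short times, so after adjusting the constant and normalising by $\rho(t_{0})^{2}$ we obtain (\ref{th:ode-below}).
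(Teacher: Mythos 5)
Your proposal follows the same overall route as the paper: pass to polar coordinates, isolate the resonant contribution of $\cos(2t)$ in both the phase and the amplitude equations, construct a special phase solution converging to the appropriate equilibrium, and then integrate the amplitude equation using oscillatory-integral estimates to get $\log\rho(t) = -\tfrac{m-r/2}{2}\log t + O(1)$. Your identification of the resonant terms and the resulting rate $t^{-(m-r/2)}$ are correct, and the integration-by-parts argument for the non-resonant oscillatory pieces matches the role played by the paper's Lemma~\ref{lemma:osc-int}.

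The one genuine methodological difference is in how the special phase solution is produced. You propose a Banach fixed-point argument on $[T,+\infty)$ with terminal condition $\varphi(+\infty)=\pi/2$. The paper instead argues in a more elementary, qualitative way: Lemma~\ref{lemma:phi2k} shows that \emph{every} solution of the phase equation converges to some multiple of $\pi$; a separate uniqueness argument shows that those converging to the unstable (odd) multiples are at most countable, so a solution converging to a stable (even) multiple certainly exists; and the decay rate $\varphi(t)-k\pi = O(t^{-\ep_0})$ is then obtained from a Gronwall-type comparison lemma (Lemma~\ref{lemma:zg}) rather than from a contraction estimate. Both routes work when $r>0$; the paper's avoids fixed-point machinery and additionally clarifies the structure of the exceptional fast-decaying solutions. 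A minor slip in your write-up: after the change $\tau=\log t$ the averaged equation is one-dimensional, so for $r<0$ the point $\pi/2$ is a repeller, not a saddle, and its stable set is trivial; this is moot here because $r>0$ is assumed, but it signals that the fixed-point formulation with a terminal condition at a \emph{stable} equilibrium needs some extra care (many solutions share the limit), which your sketch leaves open. Filling in the contraction step, or replacing it with the paper's qualitative convergence argument, would complete the proof.
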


In order to prove this result, following~\cite{2024-JDE-GG} we write the pair $(v(t),v'(t))$ in polar coordinates as
\begin{equation}
    v(t)=\rho(t)\cos(\theta(t)),
    \qquad\qquad
    v'(t)=-\rho(t)\sin(\theta(t)),
    \nonumber
\end{equation}
where $\rho$ and $\theta$ are solutions to the system
\begin{eqnarray}
\rho'(t) & = & -\rho(t)\cdot\frac{m+r\cos(2t)}{t}\cdot\sin^{2}(\theta(t)),
\label{eqn:rho}
\\[0.5ex]
\theta'(t) & = & 1-\frac{m+r\cos(2t)}{t}\cdot\frac{1}{2}\sin(2\theta(t)).
\label{eqn:theta}
\end{eqnarray}

In particular, by integrating (\ref{eqn:rho}) we obtain that
\begin{equation}
    v'(t)^2+v(t)^2=
    \rho(t)^2=
    \rho(t_0)^2\exp\left(-2\int_{t_0}^t\frac{m+r\cos(2s)}{s}\cdot\sin^{2}(\theta(s))\,ds\right)
    \label{eqn:rho-int}
\end{equation}
for every $t\geq t_0$. Now the key point is that equation (\ref{eqn:theta}) admits a solution of the form
\begin{equation}
    \theta(t)=t+\varphi(t)
    \qquad\text{with}\qquad
    \lim_{t\to +\infty}\varphi(t)=0.
    \label{th:asympt-theta}
\end{equation}

This suggests the approximation
\begin{equation}
    \sin^2(\theta(t))=
    \frac{1}{2}-\frac{1}{2}\cos(2\theta(t))\sim
    \frac{1}{2}-\frac{1}{2}\cos(2t),
    \label{th:approx-sin2}
\end{equation}
so that with some simple trigonometry we obtain that
\begin{equation}
    \frac{m+r\cos(2t)}{t}\cdot 2\sin^{2}(\theta(t))\sim
    \left(m-\frac{r}{2}\right)\frac{1}{t}+(r-m)\frac{\cos(2t)}{t}
    -\frac{r}{2}\cdot\frac{\cos(4t)}{t},
    \nonumber
\end{equation}
which allows to conclude that the argument of the exponential in (\ref{eqn:rho-int}) is equal to
\begin{equation}
    -\left(m-\frac{r}{2}\right)\log\left(\frac{t}{t_0}\right)
    -(r-m)\int_{t_0}^{t}\frac{\cos(2s)}{s}\,ds
    +\frac{r}{2}\int_{t_0}^{t}\frac{\cos(4s)}{s}\,ds.    
    \nonumber
\end{equation}

Since the last two integrals remain uniformly bounded in $t$, this suffices to establish (\ref{th:ode-below}). The core of the paper is devoted to proving (\ref{th:asympt-theta}), which we establish in Proposition~\ref{prop:theta}, and providing a rigorous justification for the approximation (\ref{th:approx-sin2}), which we address at the end of Section~\ref{sec:proofs}.

\paragraph{\textmd{\textit{Further developments}}}

The results of this paper reinforce the finding of~\cite{2024-JDE-GG} that oscillations in the damping coefficient can deteriorate the decay rate and that the underlying cause of this deterioration is resonance.

In our opinion, two questions merit further investigation. The first is to establish an upper bound for the energy of solutions when the damping coefficient exhibits large oscillations between two well-behaved coefficients (see~\cite[Open~Problem~2.3]{2024-JDE-GG}). The second is to explore intermediate scenarios between (\ref{eqn:basic}) and the slow/fast oscillations described in (\ref{hp:slow}) and (\ref{hp:fast}), for instance, by considering damping coefficients such as
\begin{equation}
    \frac{m+r\cos(t\log t)}{t}
    \qquad\qquad\textmd{or}\qquad\qquad
    \frac{m+r\cos(t/\log t)}{t}.
    \nonumber
\end{equation}

In these cases, it is conceivable that some deterioration of the decay rate still occurs, though to a lesser extent than in the present paper.

\paragraph{\textmd{\textit{Structure of the paper}}}

This paper is organized as follows. In Section~\ref{sec:prelim}, we recall the classical functional setting and state the relationship between (\ref{eqn:abstract}) and (\ref{eqn:ode-b(t)}) in terms of decay rates. This should clarify why Theorem~\ref{thm:main} follows from Proposition~\ref{prop:main}.

Section~\ref{sec:proofs} is the technical core of the paper, were we prove Proposition~\ref{prop:main}. Finally, in Section~\ref{sec:extensions}, we discuss the extent to which our result extends to more general operators and damping coefficients, thus clarifying the interplay between the damping coefficient and the frequencies associated to the elastic part in the degradation of the decay rate of solutions.


\setcounter{equation}{0}
\section{Preliminaries}\label{sec:prelim}

In this section we collect some preliminary basic material, and we deepen the connection between the abstract evolution equation (\ref{eqn:abstract}) and the family of ordinary differential equations (\ref{eqn:ode-b(t)}). The final goal is showing that Proposition~\ref{prop:main} implies Theorem\ref{thm:main}.

\paragraph{\textmd{\textit{Functional setting}}}

To formulate the problem in an abstract framework, we consider the evolution equation (\ref{eqn:abstract}) within the standard functional setting of~\cite{2024-JDE-GG}. Specifically, we work with a Hilbert space \( H \) and a linear operator \( A \) defined on \( H \) with domain \( D(A) \).   We denote by $\|v\|_H$ the norm of a vector $v\in H$.

We say that \( A \) is a \emph{nonnegative multiplication operator} if there exist a measure space \( (\mathcal{M},\mu) \), a measurable function \( \lambda:\mathcal{M}\to[0,+\infty) \), and a linear isometry \( \mathscr{F}:H\to L^{2}(\mathcal{M},\mu) \) such that  
\begin{equation}
    u\in D(A)
    \quad\Longleftrightarrow\quad
    \lambda(\xi)^{2}[\mathscr{F}u](\xi)\in L^{2}(\mathcal{M},\mu),
    \nonumber
\end{equation}
and, for all \( u\in D(A) \),  
\begin{equation}
    \left[\mathscr{F}(Au)\right](\xi) =
    \lambda(\xi)^{2}[\mathscr{F}u](\xi)
    \qquad
    \forall\xi\in\mathcal{M}.
    \nonumber
\end{equation}

Intuitively, \( \mathscr{F} \) acts as a generalized Fourier transform, identifying each vector \( u\in H \) with a function \( \widehat{u} \in L^{2}(\mathcal{M},\mu) \). Under this identification, the operator \( A \) reduces to multiplication by \( \lambda(\xi)^2 \) in \( L^{2}(\mathcal{M},\mu) \).  

The \emph{spectrum} of $A$ is the set $\operatorname{Spec}(A)$ consisting of all real numbers $\lambda\geq 0$ such that
\begin{equation}
    \mu\left(\left\{\xi\in\mathcal{M}:|\lambda(\xi)-\lambda|<s\right\}\strut\right)>0
    \qquad
    \forall s>0.
    \nonumber
\end{equation}

Equation (\ref{eqn:basic}) fits into this abstract framework by setting \( H := L^2(\re^d) \) and defining \( A \) as the negative Laplacian with domain \( H^2(\re^d) \), in which case the spectrum is the entire half-line \([0,+\infty)\). With minor modifications, this abstract setting can be extended to accommodate equations in more general open sets \( \Omega \subseteq \re^d \), potentially with boundary conditions, or equations involving elliptic operators other than the Laplacian. Naturally, the spectrum depends on both the domain and the operator, and in particular it becomes discrete in the case of the Dirichlet Laplacian on a bounded domain.

\paragraph{\textmd{\textit{Reduction to a family of ODEs}}}

Thanks to the isometry \( \mathscr{F} \), there exists a bijection between solutions of the abstract equation (\ref{eqn:abstract}) and solutions of the family of ordinary differential equations (\ref{eqn:ode-b(t)}). More precisely, a function  
\begin{equation}  
    u\in C^0\left([t_0,+\infty),D(A^{1/2})\right)\cap C^1\left([t_0,+\infty),H\strut\right)  
    \nonumber
\end{equation}  
is a (weak) solution to (\ref{eqn:abstract}) if and only if, for (almost) every \( \xi \in \mathcal{M} \), the scalar function  
\begin{equation}  
    t\mapsto\uxi(t):=[\mathscr{F}(u(t))](\xi)  
    \nonumber
\end{equation}  
satisfies the ordinary differential equation  
\begin{equation}  
    \uxi{\!''}(t)+b(t)\uxi{\!'}(t)+\lambda(\xi)^2\,\uxi(t) = 0,
    \nonumber
\end{equation}  
and the total energy  
\begin{equation}  
    \mathcal{E}_u(t):=\|u'(t)\|_H^2+\|A^{1/2}u(t)\|_H^2=
    \int_{\mathcal{M}}\left(\uxi{\!'}(t)^2
    +\lambda(\xi)^2\,\uxi(t)^2\right)d\xi  
    \label{defn:energy-abstract}
\end{equation}  
remains finite for all \( t \geq t_0 \).  

Naturally, the energy defined in (\ref{defn:energy-abstract}) coincides with the energy in (\ref{defn:energy-concrete}) when applied to equation (\ref{eqn:basic}), while (\ref{defn:Energy-sup}) now reads as
\begin{equation}
    \mathcal{D}(t):=\sup\left\{\mathcal{E}_u(t):
    \text{$u$ is a solution to (\ref{eqn:abstract}) and }
    \mathcal{E}_u(t_0)+\|u(t_0)\|_H^2\leq 1\right\}.
    \label{defn:Dt-abstract}
\end{equation}

We can introduce analogous notions for solutions to the ordinary differential equation (\ref{eqn:ode-b(t)}), even because (\ref{eqn:ode-b(t)}) corresponds to the special case of (\ref{eqn:abstract}) when $H=\re$ and $A$ is multiplication by $\lambda^2$. To introduce a specific notation, for every solution to (\ref{eqn:ode-b(t)}) we set
\begin{equation}
    \mathcal{E}^{\star}_{\lambda,\ul}(t):=\ul'(t)^2+\lambda^2\ul(t)^2
    \qquad
    \forall t\geq t_0,
    \nonumber
\end{equation}
and then we define
\begin{equation}
    \mathcal{D}^{\star}_\lambda(t):=
    \sup\left\{
    \mathcal{E}^{\star}_{\lambda,\ul}(t):
    \ul\text{ solves (\ref{eqn:ode-b(t)}) and }
    \mathcal{E}^{\star}_{\lambda,\ul}(t_0)+\ul(t_0)^2\leq 1\right\}.
    \label{defn:Dt-ode}
\end{equation}

For completeness, we note that in this definition the supremum is actually attained as a maximum, since the setting is finite-dimensional.

With these preliminaries in place, we can now formalize the relationship between the decay of solutions to (\ref{eqn:abstract}) and those of (\ref{eqn:ode-b(t)}).  

\begin{prop}[Decay rates from ODEs to PDEs]\label{prop:ode2pde}

Let $H$ be a Hilbert space, let $A$ be a nonnegative multiplication operator on $H$ with spectrum $\operatorname{Spec}(A)$, let $t_0$ be a real number, and let $b:(t_0,+\infty)\to\re$ be a measurable function that is integrable in each bounded interval. Let us consider the abstract evolution equation (\ref{eqn:abstract}), and the corresponding family of ordinary differential equations (\ref{eqn:ode-b(t)}).

Then the functions (\ref{defn:Dt-abstract}) and (\ref{defn:Dt-ode}) satisfy
\begin{equation}
    \mathcal{D}(t)=\sup\left\{
    \mathcal{D}^{\star}_\lambda(t):\lambda\in\operatorname{Spec}(A)\strut\right\}
    \qquad
    \forall t\geq t_0.
    \nonumber
\end{equation}

\end{prop}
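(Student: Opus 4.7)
The plan is to prove the two inequalities separately, both exploiting the Fourier decomposition via $\mathscr{F}$, which identifies any solution of (\ref{eqn:abstract}) with a $\mu$-measurable family $\xi\mapsto\uxi(t)$ whose components solve (\ref{eqn:ode-b(t)}) with parameter $\lambda=\lambda(\xi)$. The bridge between the two functionals is the observation that, since $\mathscr{F}$ is an isometry, the quantity $\mathcal{E}_u(t_0)+\|u(t_0)\|_H^2$ equals $\int_\mathcal{M}(\uxi{\!'}(t_0)^2+(1+\lambda(\xi)^2)\uxi(t_0)^2)\,d\mu$, which is precisely the integral over $\mathcal{M}$ of the normalizing quantity appearing in the constraint of (\ref{defn:Dt-ode}) with $\lambda=\lambda(\xi)$.

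For the upper bound $\mathcal{D}(t)\leq\sup\{\mathcal{D}^{\star}_\lambda(t):\lambda\in\operatorname{Spec}(A)\}$, I would start from any admissible $u$ and apply the pointwise-in-$\xi$ estimate
\begin{equation*}
    \uxi{\!'}(t)^2+\lambda(\xi)^2\uxi(t)^2\leq\mathcal{D}^{\star}_{\lambda(\xi)}(t)\cdot\left(\uxi{\!'}(t_0)^2+(1+\lambda(\xi)^2)\uxi(t_0)^2\right),
\end{equation*}
which comes directly from the definition of $\mathcal{D}^{\star}$ together with the linearity of (\ref{eqn:ode-b(t)}). Since the values of $\lambda(\xi)$ lie in the essential range of $\lambda(\cdot)$, and this range coincides with $\operatorname{Spec}(A)$ by the very definition in Section~\ref{sec:prelim}, for $\mu$-a.e.\ $\xi$ one can replace $\mathcal{D}^{\star}_{\lambda(\xi)}(t)$ by the supremum over the spectrum and integrate, using the initial constraint to close the estimate.

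For the lower bound I would fix $\lambda_0\in\operatorname{Spec}(A)$, and choose a maximizer $v$ of the ODE problem at frequency $\lambda_0$, normalized so that $v'(t_0)^2+(1+\lambda_0^2)v(t_0)^2=1$ and $v'(t)^2+\lambda_0^2 v(t)^2=\mathcal{D}^{\star}_{\lambda_0}(t)$; existence is ensured by the attainability remark after (\ref{defn:Dt-ode}). Given a threshold $\delta>0$, the definition of spectrum gives a measurable set $F\subseteq\mathcal{M}$ with $0<\mu(F)<+\infty$ on which $|\lambda(\xi)-\lambda_0|<\delta$, and I would then build a candidate solution via $\uxi(t):=\mu(F)^{-1/2}\chi_F(\xi)\,w_\xi(t)$, where $w_\xi$ solves (\ref{eqn:ode-b(t)}) with parameter $\lambda(\xi)$ and initial data $(v(t_0),v'(t_0))$. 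Continuous dependence of ODE solutions on the parameter $\lambda$, uniformly on the compact interval $[t_0,t]$, shows that the initial normalization differs from $1$ and the value of $\mathcal{E}_u(t)$ differs from $\mathcal{D}^{\star}_{\lambda_0}(t)$ by quantities that vanish as $\delta\to 0$. Combining these bounds and letting $\delta\to 0$ yields $\mathcal{D}(t)\geq\mathcal{D}^{\star}_{\lambda_0}(t)$, after which I would take the supremum over $\lambda_0\in\operatorname{Spec}(A)$.

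The only non-routine point, beyond the bookkeeping in the continuous-dependence estimate, is the joint measurability of $\xi\mapsto w_\xi(s)$ and its derivative on $F$. This follows because $\xi\mapsto\lambda(\xi)$ is measurable by assumption, and the flow of the linear equation (\ref{eqn:ode-b(t)}) is a continuous function of the parameter $\lambda$, so $w_\xi$ arises as a measurable composition; the local uniform control of $(w_\xi,w_\xi')$ on $[t_0,t]$ then guarantees that the resulting $u$ lies in the admissible class of solutions to (\ref{eqn:abstract}) described after (\ref{defn:energy-abstract}), making the construction legitimate.
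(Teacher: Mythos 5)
Your proof is correct and follows essentially the same two-step strategy the paper sketches: the upper bound by integrating the pointwise ODE estimate over $\mathcal{M}$ (using that $\lambda(\xi)$ lies in the essential range, hence in $\operatorname{Spec}(A)$, for $\mu$-a.e.\ $\xi$), and the lower bound by concentrating the generalized Fourier transform of a trial solution on a set where $\lambda(\xi)$ is $\delta$-close to a fixed $\lambda_0\in\operatorname{Spec}(A)$ and invoking continuous dependence of the ODE flow on the parameter. The paper leaves most of the lower-bound construction to a citation to~\cite[Section~5]{2024-JDE-GG}, so you have actually supplied more detail (including the measurability and admissibility checks) than the paper's own sketch.
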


The proof of Proposition~\ref{prop:ode2pde} above is rather classical, and therefore we limit ourselves to sketching the argument. To prove the $\leq$ inequality, it is enough to consider the ``components'' $\uxi(t)$ of any solution $u(t)$ to (\ref{eqn:abstract}), and observe that they satisfy
\begin{eqnarray*}
    \mathcal{E}^{\star}_{\lambda(\xi),\uxi}(t) & \leq &
    \mathcal{D}^{\star}_{\lambda(\xi)}(t)\left(
    \mathcal{E}^{\star}_{\lambda(\xi),\uxi}(t_0)+\uxi(t_0)^2\right)
    \\
    & \leq & 
    \sup\left\{\mathcal{D}^{\star}_\lambda(t):\lambda\in\operatorname{Spec}(A)\strut\right\}
    \left(\mathcal{E}^{\star}_{\lambda(\xi),\uxi}(t_0)+\uxi(t_0)^2\right)
\end{eqnarray*}
for every $t\geq t_0$ and $\mu$-almost every $\xi\in\mathcal{M}$. When we integrate over $\mathcal{M}$ we obtain that
\begin{equation}
     \mathcal{E}_{u}(t)\leq
    \sup\left\{\mathcal{D}^{\star}_\lambda(t):\lambda\in\operatorname{Spec}(A)\strut\right\}
    \left(\mathcal{E}_{u}(t_0)+\|u(t_0)\|_H^2\right),
    \nonumber
\end{equation}
and we conclude by taking the supremum with respect to the admissible solutions $u$.

To prove the $\geq$ inequality, we consider a value $\lambda_0 \in \operatorname{Spec}(A)$ and a solution $u_{\lambda_0}$ to (\ref{eqn:ode-b(t)}) with $\lambda = \lambda_0$, whose energy at time $t$ is close to the right-hand side. We then construct a solution to (\ref{eqn:abstract}) whose components approximate $u_{\lambda_0}$ and are concentrated in the region of $\mathcal{M}$ where $\lambda(\xi)$ is close to $\lambda_0$. For further details, we refer the reader to~\cite[Section~5]{2024-JDE-GG}, where a similar technique was employed.
    
\paragraph{\textmd{\textit{Proposition~\ref{prop:main} implies Theorem~\ref{thm:main}}}}

Let us assume that for some $t\geq t_0$ and some $\lambda\in\operatorname{Spec}(A)$ there exist a constant $\Phi(t,\lambda)$ and a nonzero solution $\ul$ to (\ref{eqn:ode-b(t)}) such that
\begin{equation}
    \ul'(t)^2+\lambda^2\ul(t)^2\geq 
    \Phi(t,\lambda)\left(\ul'(t_0)^2+\lambda^2\ul(t_0)^2\right).
    \nonumber
\end{equation}

Then in particular 
\begin{equation}
    \mathcal{E}^\star_{\lambda,\ul}(t)\geq 
    \Phi(t,\lambda)\cdot\frac{\lambda^2}{1+\lambda^2}
    \left(\mathcal{E}^\star_{\lambda,\ul}(t_0)+\ul(t_0)^2\right),
    \nonumber
\end{equation}
and hence by Proposition~\ref{prop:ode2pde}
\begin{equation}
    \mathcal{D}(t)\geq
    \mathcal{D}^{\star}_\lambda(t)\geq
    \frac{\lambda^2}{1+\lambda^2}\cdot\Phi(t,\lambda).
    \nonumber
\end{equation}

Since $\lambda=1$ belongs to the spectrum of the negative Laplacian in $\re^d$, this shows that Proposition~\ref{prop:main} implies Theorem~\ref{thm:main} with $C_1(t_0,m,r)=C_2(t_0,m,r)/2$.


\setcounter{equation}{0}
\section{Proofs}\label{sec:proofs}

In this Section we prove Proposition~\ref{prop:main}, and hence also Theorem~\ref{thm:main}. To start, we recall a result on the convergence of certain oscillatory integrals, which we will use multiple times throughout the proof.

\begin{lemma}[Oscillatory integrals]\label{lemma:osc-int}

Let $K_0$ and $t_0$ be two positive real numbers, and let $h:[t_0,+\infty)\to\re$ be a function of class $C^1$ (but absolutely continuous is enough) such that
\begin{equation}
    |h'(t)|\leq\frac{K_0}{t}
    \qquad
    \forall t\geq t_0.
    \nonumber
\end{equation}

Then for every positive integer $n$, and every positive real number $\alpha$, the integrals
\begin{equation}
    \int_{t_0}^{+\infty}\frac{\cos(nt+h(t))}{t^\alpha}\,dt
    \qquad\quad\text{and}\quad\qquad
    \int_{t_0}^{+\infty}\frac{\sin(nt+h(t))}{t^\alpha}\,dt
    \nonumber
\end{equation}
are convergent.
    
\end{lemma}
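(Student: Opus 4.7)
The plan is to reduce each integral to a two-step integration by parts: the outer factor $1/t^\alpha$ is handled routinely, and the only delicate issue is controlling an antiderivative of $\cos(nt+h(t))$ (respectively $\sin(nt+h(t))$).

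First I would define
\[
F(t):=\int_{t_0}^{t}\cos(ns+h(s))\,ds
\]
and show that $F(t)=O(\log t)$ as $t\to+\infty$. The naive identity $\cos(ns+h(s))=(n+h'(s))^{-1}\frac{d}{ds}\sin(ns+h(s))$ is of no help here, because a second integration by parts would require control on $h''$, which is not assumed. Instead I would expand
\[
\cos(ns+h(s))=\cos(ns)\cos(h(s))-\sin(ns)\sin(h(s)),
\]
and integrate by parts in each of the two pieces, differentiating the factor depending on $h$ and integrating the pure $\cos(ns)$ or $\sin(ns)$. The resulting boundary terms are uniformly bounded (by something like $4/n$), and the two new integrands, after the angle-addition identity is applied in reverse, collapse to
\[
-\frac{1}{n}\int_{t_0}^{t}h'(s)\cos(ns+h(s))\,ds,
\]
whose absolute value is at most $(K_0/n)\log(t/t_0)$ by the hypothesis $|h'(s)|\le K_0/s$. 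This yields $|F(t)|\le C+(K_0/n)\log(t/t_0)$, i.e.\ $F(t)=O(\log t)$.

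Second, I would integrate by parts once more, writing
\[
\int_{t_0}^{S}\frac{\cos(nt+h(t))}{t^\alpha}\,dt
=\int_{t_0}^{S}\frac{F'(t)}{t^\alpha}\,dt
=\frac{F(S)}{S^\alpha}+\alpha\int_{t_0}^{S}\frac{F(t)}{t^{\alpha+1}}\,dt,
\]
using $F(t_0)=0$. Since $F(t)=O(\log t)$ and $\alpha>0$, the boundary term tends to $0$ as $S\to+\infty$, and the remaining integral converges absolutely because $\int^{+\infty}t^{-\alpha-1}\log t\,dt<+\infty$. The sine integral is treated identically, after replacing $\cos(ns+h(s))$ by $\sin(ns+h(s))$ in the definition of $F$ and redoing the same computation with $\cos$ and $\sin$ swapped.

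The main obstacle is really the first step: both the natural substitution $u=nt+h(t)$ and the direct integration by parts of $\cos(\psi(t))/t^\alpha$ would require $h''$, which is not in the hypotheses. The angle-addition trick is essential because it moves every subsequent differentiation onto factors depending only on $h'$, which is exactly the quantity the hypothesis controls; the price one pays is a possibly logarithmic (rather than bounded) antiderivative $F(t)$, but this is still comfortably compatible with any positive power $t^\alpha$ in the denominator.
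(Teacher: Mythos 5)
Your argument is correct, and it is a standard nonstationary-phase estimate; the paper itself does not spell out a proof but delegates to \cite[Lemma~3.3]{2024-JDE-GG} for $\alpha=1$ with the remark that the extension to $\alpha\in(0,1]$ is immediate, so your version has the advantage of being self-contained and of treating all $\alpha>0$ uniformly (for $\alpha>1$ the integral is of course already absolutely convergent). Your key estimate $F(t)=O(\log t)$ does follow from
\begin{equation*}
F(t)=\frac{1}{n}\Bigl[\sin(ns+h(s))\Bigr]_{t_0}^{t}
-\frac{1}{n}\int_{t_0}^{t}h'(s)\cos(ns+h(s))\,ds,
\end{equation*}
together with $|h'(s)|\le K_0/s$, and the second integration by parts against $t^{-\alpha}$ then closes the argument because $\int^{+\infty}t^{-\alpha-1}\log t\,dt<+\infty$ for every $\alpha>0$. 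One small remark: the angle-addition detour is not actually necessary to avoid $h''$. Differentiating $\sin(nt+h(t))$ gives directly
\begin{equation*}
n\cos(nt+h(t))=\frac{d}{dt}\sin(nt+h(t))-h'(t)\cos(nt+h(t)),
\end{equation*}
and integrating this identity from $t_0$ to $t$ yields the boxed formula for $F(t)$ in one line, with no integration by parts at all in the inner step; your expansion and recombination is just a longer route to the same identity, so this is a stylistic point rather than a gap. Everything else you wrote, including the bound on the boundary terms, the statement $F(t_0)=0$, and the symmetric treatment of the sine integral, is sound.
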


The above result is nontrivial only in the range $\alpha\in(0,1]$, where the integrals are not absolutely convergent. Lemma~\ref{lemma:osc-int} was originally proved in \cite[Lemma~3.3]{2024-JDE-GG} for the case $\alpha=1$, but the proof extends without significant modifications to every $\alpha\in(0,1]$.


The second preliminary result we need is an estimate for a scalar linear equation, stating that if the restoring term has the correct sign and the forcing term has a bounded integral, then the solutions remain bounded.

\begin{lemma}\label{lemma:zg}

Let $T$ be a real number, let $f:[T,+\infty)\to\re$ and $g:[T,+\infty)\to[0,+\infty)$ be two continuous functions, and let $z:[T,+\infty)\to\re$ be a function of class $C^1$ such that
\begin{equation}
    z'(t)=f(t)-g(t)z(t)
    \qquad
    \forall t\geq T.
    \label{hp:z-eqn}
\end{equation}

Let us assume that there exists a real number $M$ such that
\begin{equation}
    \left|\int_{t_1}^{t_2}f(t)\,dt\right|\leq M
    \qquad
    \forall(t_1,t_2)\in[T,+\infty)^2.
    \label{hp:bound-A}
\end{equation}

Then it turns out that
\begin{equation}
    |z(t)|\leq|z(T)|+M
    \qquad
    \forall t\geq T.
    \label{th:z-bound}
\end{equation}

\end{lemma}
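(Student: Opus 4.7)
The plan is to argue by contradiction, using the nonnegativity of $g$ to force the antiderivative of $f$, based at a cleverly chosen time, to control the growth of $z$. I would suppose that $z(t^*)>|z(T)|+M$ for some $t^*>T$; the opposite inequality $z(t^*)<-(|z(T)|+M)$ is handled by applying the same argument to $-z$, which satisfies an equation of the same type with $-f$ in place of $f$ (and $-f$ still satisfies (\ref{hp:bound-A}) with the same constant $M$).

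The key ingredient is the choice of the time
\[
    t_1:=\sup\bigl\{t\in[T,t^*]:z(t)\leq|z(T)|\bigr\},
\]
which is well-defined and lies in $[T,t^*)$: indeed $T$ belongs to the set because $z(T)\leq|z(T)|$, while $t^*$ does not because $z(t^*)>|z(T)|+M\geq|z(T)|$. By continuity $z(t_1)\leq|z(T)|$, and by maximality $z(t)>|z(T)|\geq 0$ for every $t\in(t_1,t^*]$; in particular $z$ has constant (nonnegative) sign on $[t_1,t^*]$.

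Next I would introduce the auxiliary function $H(t):=z(t)-\int_{t_1}^{t}f(s)\,ds$. Thanks to (\ref{hp:z-eqn}) its derivative equals $H'(t)=z'(t)-f(t)=-g(t)z(t)$, which is non-positive on $[t_1,t^*]$ since $g\geq 0$ and $z\geq 0$ there. Hence $H$ is non-increasing on this interval, so
\[
    H(t^*)\leq H(t_1)=z(t_1)\leq|z(T)|.
\]
Combined with (\ref{hp:bound-A}) applied on $[t_1,t^*]$, this yields $z(t^*)=H(t^*)+\int_{t_1}^{t^*}f(s)\,ds\leq|z(T)|+M$, contradicting the assumption and proving (\ref{th:z-bound}).

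The main subtlety is the choice of the anchor point $t_1$ for the antiderivative of $f$. A naive attempt, either with $H(t)=z(t)-\int_{T}^{t}f$ or with a squared-energy estimate of the form $\frac{d}{dt}(z-F)^2\leq\cdots$, fails because when $z$ changes sign in $[T,t^*]$ the term $-gz$ in $H'$ loses its definite sign; one then only obtains a weaker bound of the shape $|z(t)|\leq\max(|z(T)|,M)+M$, which can exceed $|z(T)|+M$. Anchoring the antiderivative of $f$ at the last time where $z$ still sits at or below $|z(T)|$ is exactly what is needed to restore sign-definiteness of $z$ on the relevant interval, and with it the monotonicity of $H$; this is precisely where the sharp factor $M$ (rather than $2M$) in (\ref{th:z-bound}) comes from.
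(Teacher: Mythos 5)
Your proof is correct and uses essentially the same mechanism as the paper: anchor the integration of $f$ at a time $t_1$ after which $z$ stays on one side, where $g\geq 0$ makes the term $-gz$ help. The paper argues directly (for any fixed $t$, it takes $t_1$ to be the start of the maximal interval of nonnegativity of $z$ ending at $t$, splitting into the cases $t_1=T$ and $t_1>T$), whereas you argue by contradiction and anchor at the last time $z\leq|z(T)|$; this is a cosmetic reorganization of the same idea, and both deliver the sharp constant $|z(T)|+M$.
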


\begin{proof}

Let us fix any $t\geq T$. If either $t=T$ or $z(t)=0$, then (\ref{th:z-bound}) is trivial. Otherwise, let us assume for the time being that $z(t)>0$, and let us set
\begin{equation}
    t_1:=\inf\{\tau\in[T,t]:z(s)\geq 0\quad\forall s\in[\tau,t]\}.
    \nonumber
\end{equation}

\begin{itemize}
    \item If $t_1=T$, then $z(s)\geq 0$ for every $s\in[T,t]$. Since $g$ is nonnegative, from (\ref{hp:z-eqn}) we deduce that $z'(s)\leq f(s)$ for every $s\in[T,t]$, and therefore
    \begin{equation}
        z(t)\leq z(T)+\int_{T}^t f(s)\,ds,
    \nonumber
    \end{equation}
    so that (\ref{th:z-bound}) follows from the triangle inequality and (\ref{hp:bound-A}).

    \item If $t_1>T$, then necessarily $z(t_1)=0$ and $z(s)\geq 0$ for every $s\in[t_1,t]$. Exploiting again that $g$ is nonnegative, from (\ref{hp:z-eqn}) we deduce that $z'(s)\leq f(s)$ for every $s\in[t_1,t]$, and therefore
    \begin{equation}
        z(t)\leq z(t_1)+\int_{t_1}^t f(s)\,ds=\int_{t_1}^t f(s)\,ds,
    \nonumber
    \end{equation}
    so that again (\ref{th:z-bound}) follows from (\ref{hp:bound-A}).
\end{itemize}

The case where $z(t)<0$ can be treated in a symmetric way, or simply deduced from the previous one by observing that the function $-z$ solves an analogous differential equation, just with forcing term $-f$ instead of $f$.
    \end{proof}


Let us consider now the ordinary differential equation
\begin{equation}
    \varphi'(t)=-\frac{r}{2}\cdot\frac{\sin(\varphi(t))}{t}
    \qquad
    t>0,
    \nonumber
\end{equation}
where $r$ is a positive parameter. It is easy to verify that this equation admits stationary solutions of the form $\varphi(t) \equiv k\pi$, where $k \in \z$. Among these, the solutions with odd $k$ are unstable, while those with even $k$ are stable and attract all solutions within the strip $(k-1)\pi < \varphi(t) < (k+1)\pi$.  

In the following result, we show that a similar asymptotic behavior persists even when an integrable forcing term is added.

\begin{lemma}\label{lemma:phi2k}

Let $r$ and $t_0$ be positive real numbers, let $a:[t_0,+\infty)\to\re$ be a continuous function, and let $\varphi:[t_0,+\infty)\to\re$ be a function of class $C^1$ such that
\begin{equation}
    \varphi'(t)=a(t)-\frac{r}{2}\cdot\frac{\sin(\varphi(t))}{t}
    \qquad
    \forall t\geq t_0.
    \label{hp:eqn-phi'}
\end{equation}

Then the following statements hold.
\begin{enumerate}
\renewcommand{\labelenumi}{(\arabic{enumi})}
    \item \emph{(Convergence to a multiple of $\pi$)}. If the integral
    \begin{equation}
        \int_{t_0}^{+\infty}a(t)\,dt
    \label{hp:int-a}
    \end{equation}
    is convergent, then there exists $k\in\z$ such that
    \begin{equation}
        \lim_{t\to +\infty}\varphi(t)=k\pi.
        \label{th:phi-lim}
    \end{equation}

    \item \emph{(Rate of convergence in the even case)}. If in addition $k$ is even, and there exists a real number $\ep_0\in(0,r/2)$ such that the integral
    \begin{equation}
        \int_{t_0}^{+\infty}t^{\ep_0}a(t)\,dt
        \label{hp:int-a-rate}
    \end{equation}
    is convergent, then
    \begin{equation}
        \sup\left\{t^{\ep_0}(\varphi(t)-k\pi):t\geq t_0\right\}<+\infty.
        \label{th:rate-ep0}
    \end{equation}
\end{enumerate}

\end{lemma}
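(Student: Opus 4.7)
The plan is to first trap $\varphi$ in a single well $((2k-1)\pi,(2k+1)\pi)$ and then use a Lyapunov-type argument to force convergence to $2k\pi$ (part~(1)); for part~(2), I would linearize around the limit and invoke Lemma~\ref{lemma:zg}. Throughout the argument it is convenient to set
\[
M(t):=\sup\left\{\left|\int_{t_{1}}^{t_{2}}a(s)\,ds\right|:t_{2}\geq t_{1}\geq t\right\},
\]
which tends to $0$ by (\ref{hp:int-a}). The main obstacle in part~(1) will be the uniform boundedness of the oscillatory integral $\int a(s)\sin\zeta(s)\,ds$ below, where the fact that $\int a$ need not be absolutely convergent prevents any naive estimate.

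\textbf{Trapping step.} From (\ref{hp:eqn-phi'}), whenever $\varphi$ stays in $[2k\pi+\delta,(2k+1)\pi-\delta]$ throughout $[t_{1},t_{2}]$, so that $\sin\varphi\geq\sin\delta>0$, integration of the equation gives
\[
\varphi(t_{2})-\varphi(t_{1}) \leq M(t_{1})-\frac{r}{2}\sin\delta\cdot\log\frac{t_{2}}{t_{1}}\leq M(t_{1}).
\]
A last-exit argument then rules out, for $t_{1}\geq T$ with $M(T)<\pi-2\delta$, any passage from $2k\pi+\delta$ to $(2k+1)\pi-\delta$; the symmetric argument handles the other direction. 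Hence $\varphi(t)$ is eventually trapped in one well $((2k-1)\pi,(2k+1)\pi)$, and in particular is bounded.

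\textbf{Convergence step.} Setting $\zeta:=\varphi-2k\pi$ and $V(t):=1-\cos\zeta(t)\in[0,2)$, differentiation along the ODE yields
\[
V'(t)+\frac{r}{2t}\sin^{2}\zeta(t)=a(t)\sin\zeta(t).
\]
Integrating this identity reduces the convergence of $\int_{T}^{+\infty}\sin^{2}\zeta(s)/s\,ds$ to the uniform boundedness in $t$ of $\int_{T}^{t}a(s)\sin\zeta(s)\,ds$. This is the delicate point: a single integration by parts with $\widetilde A(s):=\int_{s}^{+\infty}a$ produces residual terms of the form $\int\widetilde A(s)a(s)\cos\zeta(s)\,ds$ that are not absolutely convergent. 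I would handle them by iterating the integration by parts, exploiting $\widetilde A\cdot a=-\tfrac12(\widetilde A^{2})'$ and so on, or equivalently by working with a modified Lyapunov functional $V-\widetilde A\sin\zeta-\tfrac12\widetilde A^{2}\cos\zeta+\cdots$ whose derivative is a nonpositive term plus an absolutely integrable remainder; the series converges geometrically once $T$ is so large that $\|\widetilde A\|_{L^{\infty}([T,+\infty))}<1$. Once $\int_{T}^{+\infty}\sin^{2}\zeta(s)/s\,ds<+\infty$ is in hand, a slow-variation argument upgrades it to the pointwise statement $\sin^{2}\zeta(t)\to 0$: from the ODE, the oscillation of $\zeta$ on $[\sigma,(1+\eta)\sigma]$ is at most $M(T)+(r/2)\log(1+\eta)$, hence arbitrarily small, while the tail of the convergent integral on the same interval is bounded below by $\log(1+\eta)$ times $\inf_{[\sigma,(1+\eta)\sigma]}\sin^{2}\zeta$. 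Combined with the trapping step, this gives $\zeta(t)\to 0$, which is (\ref{th:phi-lim}).

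\textbf{Rate in the even case.} With $k$ even, $\zeta:=\varphi-k\pi\to 0$ still satisfies (\ref{hp:eqn-phi'}) by $2\pi$-periodicity of $\sin$. Writing $\sin\zeta=\zeta-\zeta\,h(\zeta)$ with $h(\zeta):=(\zeta-\sin\zeta)/\zeta=O(\zeta^{2})$, and introducing $z(t):=t^{\ep_{0}}\zeta(t)$, a direct computation gives
\[
z'(t)=-\frac{1}{t}\left(\frac{r}{2}-\ep_{0}-\frac{r}{2}h(\zeta(t))\right)z(t)+t^{\ep_{0}}a(t).
\]
For $t\geq T$ sufficiently large, $|h(\zeta(t))|$ is so small that the coefficient in parentheses lies in the interval $[(r/2-\ep_{0})/2,\,r/2]$, and in particular is nonnegative. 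Applying Lemma~\ref{lemma:zg} with this coefficient divided by $t$ as $g(t)$, and with $f(t):=t^{\ep_{0}}a(t)$, whose partial integrals are uniformly bounded by (\ref{hp:int-a-rate}), yields $|z(t)|\leq|z(T)|+M_{\ep_{0}}$ for every $t\geq T$, which is exactly (\ref{th:rate-ep0}).
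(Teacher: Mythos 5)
Your trapping step and your part~(2) are sound, and part~(2) is in fact essentially the paper's argument (linearize $\sin$ around the even multiple of $\pi$, introduce $z=t^{\ep_0}\zeta$, and invoke Lemma~\ref{lemma:zg}). The gap is in the convergence step, and it is a real one.

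You want to conclude $\int_T^{+\infty}\sin^2\zeta(s)/s\,ds<+\infty$ from the identity $V'+(r/2t)\sin^2\zeta=a\sin\zeta$ by producing a modified Lyapunov functional whose derivative is ``a nonpositive term plus an absolutely integrable remainder.'' But already at the first stage the remainder is not absolutely integrable. Taking $W_1:=V+\widetilde A\sin\zeta$ with $\widetilde A(t)=\int_t^{+\infty}a$, one finds
\begin{equation*}
W_1'(t)=-\frac{r}{2t}\sin^2\zeta(t)-\frac{r}{2t}\,\widetilde A(t)\sin\zeta(t)\cos\zeta(t)+\widetilde A(t)\,a(t)\cos\zeta(t).
\end{equation*}
The third term can indeed be handled by iterating the integration by parts (it is $-\tfrac12(\widetilde A^2)'\cos\zeta$), but the second term is of size $(1/t)\,|\widetilde A(t)|$, and the hypotheses give no rate at which $\widetilde A(t)\to 0$, so $\int |\widetilde A(t)|/t\,dt$ may well diverge. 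Nor can it be absorbed into the nonpositive term: $-\frac{r}{2t}\sin\zeta\,(\sin\zeta+\widetilde A\cos\zeta)$ becomes positive, of size up to $(r/2t)\widetilde A^2$, precisely in the regime $|\sin\zeta|<|\widetilde A|$ with opposite signs, which is exactly the regime you are heading into as $\zeta\to 0$. The same defect reappears at every order of the iteration, so the ``geometric convergence'' does not rescue the scheme. There is also a circularity risk: you would need some decay of $\sin\zeta$ to control $\int a\sin\zeta$, but that decay is what you are trying to establish.

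The paper sidesteps all of this. Its Step~1 uses your persistence-of-sign argument not merely to trap $\varphi$ in a well, but to prove outright that $\liminf\varphi=\limsup\varphi$ (if not, pick $\alpha<\beta$ between them with $\sin$ of constant sign; each passage from $\alpha$ to $\beta$ costs at least $\beta-\alpha$ of $\int a$, contradicting convergence). Step~2 rules out $\pm\infty$ the same way. Once a finite limit $\ell$ exists, Step~3 is immediate: if $\sin\ell\neq 0$, then eventually $\sin\varphi$ is bounded away from $0$ with a fixed sign, so $\varphi'\leq a(t)-\frac{r}{4t}\sin\ell$ (say), and integrating sends $\varphi\to-\infty$, contradicting $\varphi\to\ell$. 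No oscillatory cancellation is needed. I would suggest replacing your Lyapunov step by this: strengthen your trapping argument to conclude that $\varphi$ has a finite limit (you already have the key inequality), and then run the three-line Step~3.
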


\begin{proof}

We divide the proof into four steps.

\paragraph{\textmd{\textit{Step 1 -- Existence of the limit}}}

Let us assume by contradiction that
\begin{equation}
    I:=\liminf_{t\to +\infty}\varphi(t)<\limsup_{t\to +\infty}\varphi(t)=:S,
    \label{defn:IS}
\end{equation}
and let us choose two real numbers $\alpha$ and $\beta$ satisfying
\begin{equation}
    I<\alpha<\beta<S
    \label{defn:IabS}
\end{equation}
and such that $\sin x$ has constant sign in $[\alpha,\beta]$. Just to fix ideas, let us assume that
\begin{equation}
    \sin x\geq 0
    \qquad
    \forall x\in[\alpha,\beta],
    \label{hp:sin>=0}
\end{equation}
because the other case is symmetric. From (\ref{defn:IS}) and (\ref{defn:IabS}) we deduce that there exists a sequence of intervals $[s_n,t_n]\subseteq[t_0,+\infty)$, with $s_n\to +\infty$ and $t_n\to +\infty$, such that 
\begin{equation}
    \varphi(s_n)=\alpha,
    \qquad\qquad
    \varphi(t_n)=\beta,
    \qquad\qquad
    \alpha\leq\varphi(t)\leq\beta
    \qquad
    \forall t\in[s_n,t_n].
    \nonumber
\end{equation}

As a consequence, from (\ref{hp:eqn-phi'}) and (\ref{hp:sin>=0}) we deduce that
\begin{equation}
    \varphi'(t)\leq a(t)
    \qquad
    \forall t\in[s_n,t_n],
    \nonumber
\end{equation}
and hence 
\begin{equation}
    \int_{s_n}^{t_n}a(t)\,dt\geq
    \int_{s_n}^{t_n}\varphi'(t)\,dt=
    \varphi(t_n)-\varphi(s_n)=
    \beta-\alpha
    \nonumber
\end{equation}
for every positive integer $n$, but this contradicts the convergence of the integral (\ref{hp:int-a}).

\paragraph{\textmd{\textit{Step 2 -- The limit is real}}}

Let us assume by contradiction that
\begin{equation}
    \lim_{t\to +\infty}\varphi(t)=+\infty.
    \nonumber
\end{equation}

Then for every large enough integer $n$ there exists $[s_n,t_n]\subseteq[t_0,+\infty)$ such that
\begin{equation}
    \varphi(s_n)=2n\pi,
    \qquad\qquad
    \varphi(t_n)=(2n+1)\pi,
    \nonumber
\end{equation}
and
\begin{equation}
    2n\pi\leq\varphi(t)\leq(2n+1)\pi
    \qquad
    \forall t\in[s_n,t_n].
    \nonumber
\end{equation}

At this point, the same argument of the previous step gives that
\begin{equation}
    \int_{s_n}^{t_n}a(t)\,dt\geq
    \int_{s_n}^{t_n}\varphi'(t)\,dt=
    \varphi(t_n)-\varphi(s_n)=\pi
    \nonumber
\end{equation}
for every $n$, thus contradicting the convergence of the integral (\ref{hp:int-a}).

The case where $\varphi(t)\to -\infty$ as $t\to +\infty$ can be excluded analogously.

\paragraph{\textmd{\textit{Step 3 -- The limit is a multiple of $\pi$}}}

From the previous two steps we know that
\begin{equation}
    \lim_{t\to +\infty}\varphi(t)=\ell\in\re.
    \label{hp:lim=l}
\end{equation}

We claim that $\sin\ell=0$. Let us assume by contradiction that $\sin\ell>0$ (again the other case is symmetric). Then there exists $t_1\geq t_0$ such that
\begin{equation}
    \sin(\varphi(t))\geq\frac{1}{2}\sin\ell
    \qquad
    \forall t\geq t_1,
    \nonumber
\end{equation}
and therefore from (\ref{hp:eqn-phi'}) we deduce that
\begin{equation}
    \varphi'(t)\leq a(t)-\frac{r}{4t}\sin\ell
    \qquad
    \forall t\geq t_1,      
    \nonumber
\end{equation}
so that
\begin{equation}
    \varphi(t)\leq\varphi(t_1)
    +\int_{t_1}^t a(s)\,ds
    -\frac{r\sin\ell}{4}\log\left(\frac{t}{t_1}\right)
    \qquad
    \forall t\geq t_1.
    \nonumber
\end{equation}

Letting $t\to+\infty$, and recalling the convergence of the integral (\ref{hp:int-a}), we conclude that $\varphi(t)\to -\infty$ as $t\to +\infty$, which contradicts (\ref{hp:lim=l}).

\paragraph{\textmd{\textit{Step 4 -- Convergence rate in the even case}}}

Let us assume now that $k$ is even and the integral (\ref{hp:int-a-rate}) is convergent for some $\ep_0\in(0,r/2)$. Let us introduce the function
\begin{equation}
    z(t):=t^{\ep_0}(\varphi(t)-k\pi)
    \qquad
    \forall t\geq t_0.
    \nonumber
\end{equation}

With some computations we find that
\begin{equation}
    z'(t)=\ep_0\frac{z(t)}{t}+t^{\ep_0}a(t)-t^{\ep_0}\cdot\frac{r}{2}\cdot\frac{\sin(\varphi(t))}{t}
    \qquad
    \forall t\geq t_0.
    \label{eqn:z'}
\end{equation}

Since $k$ is even, from (\ref{th:phi-lim}) we know that
\begin{equation}
    \sin(\varphi(t))=
    \sin(\varphi(t)-k\pi)=
    (\varphi(t)-k\pi)(1-\gamma(t))
    \qquad
    \forall t\geq t_0, 
    \label{defn:gamma}
\end{equation}
for a suitable function $\gamma:[t_0,+\infty)\to\re$ such that
\begin{equation}
    \lim_{t\to +\infty}\gamma(t)=0.
    \label{eqn:lim-gamma}
\end{equation}

Plugging (\ref{defn:gamma}) into (\ref{eqn:z'}) we obtain that
\begin{equation}
    z'(t)=t^{\ep_0}a(t)
    -\frac{1}{t}\left(-\ep_0+\frac{r}{2}-\frac{r}{2}\cdot\gamma(t)\right)z(t),   
    \nonumber
\end{equation}
and from (\ref{eqn:lim-gamma}) we deduce that there exists $t_2\geq t_0$ such that
\begin{equation}
    g(t):=-\ep_0+\frac{r}{2}(1-\gamma(t))\geq 0
    \qquad
    \forall t\geq t_2.  
    \label{defn:g}
\end{equation}

Therefore, we can apply Lemma~\ref{lemma:zg} to the function $z(t)$, with $g(t)$ given by (\ref{defn:g}), $T := t_2$, and $f(t) := t^{\ep_0}a(t)$. Note that condition (\ref{hp:bound-A}) follows from the convergence of the integral in (\ref{hp:int-a-rate}). As a result, we conclude that $|z(t)|$ remains bounded for all $t \geq t_2$. Since $|z(t)|$ is also bounded in the compact interval $[t_0, t_2]$, this establishes (\ref{th:rate-ep0}).
\end{proof}

\begin{rmk}[Additional insights on the structure of  solutions]
\begin{em}

It is not required for the proof of our main result, but we could say more about the asymptotic behavior of solutions to (\ref{hp:eqn-phi'}) when the integral (\ref{hp:int-a}) is convergent. Specifically, there exists a real number $\varphi_0$ with the following property.
\begin{itemize}
    \item For every $k\in\z$, the solution to (\ref{hp:eqn-phi'}) with initial condition $\varphi(t_0)=\varphi_0+2k\pi$ tends to $(2k-1)\pi$ as $t\to+\infty$.

    \item For every $k\in\z$, all solutions to (\ref{hp:eqn-phi'}) with $\varphi_0+2k\pi<\varphi(t_0)<\varphi_0+2(k+1)\pi$ tend to $2k\pi$ as $t\to+\infty$.
    
\end{itemize}

In addition, all solutions satisfy the decay condition (\ref{th:rate-ep0}) for every $\ep_0 \in (0, r/2)$ such that the integral in (\ref{hp:int-a-rate}) converges.

\end{em}
\end{rmk}


We are now ready to show that there exists a special solutions to (\ref{eqn:theta}) such that $\theta(t)-t$ that tends to~0 as $t\to +\infty$ with a rate proportional to a positive power of $t$.

\begin{prop}[A special solution to (\ref{eqn:theta})]\label{prop:theta}

Let $t_0$, $m$, $r$ be three positive real numbers.

Then there exists at least one solution $\theta:[t_0,+\infty)\to\re$ to equation (\ref{eqn:theta}) such that
\begin{equation}
    \sup\left\{t^{\ep_0}|\theta(t)-t|:t\geq t_0\strut\right\}<+\infty
    \qquad
    \forall\ep_0\in(0,\min\{r/2,1\}).
    \label{th:theta-rate}
\end{equation}
    
\end{prop}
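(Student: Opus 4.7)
The plan is to change variables to $\varphi(t) := 2\theta(t) - 2t$ so that the equation fits exactly into the framework of Lemma~\ref{lemma:phi2k}, and then apply its two conclusions in sequence. Using $\sin(2\theta) = \sin(2t+\varphi) = \sin(2t)\cos\varphi + \cos(2t)\sin\varphi$ together with $\sin(2t)\cos(2t) = \tfrac{1}{2}\sin(4t)$ and $2\cos^2(2t) = 1 + \cos(4t)$, equation~(\ref{eqn:theta}) is equivalent to
\begin{equation*}
\varphi'(t) = -\frac{1}{t}\left[\frac{r}{2}\sin\varphi(t) + m\sin(2t+\varphi(t)) + \frac{r}{2}\sin(4t+\varphi(t))\right],
\end{equation*}
which has the shape of (\ref{hp:eqn-phi'}) with $a(t) := -[m\sin(2t+\varphi(t)) + (r/2)\sin(4t+\varphi(t))]/t$. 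The target estimate~(\ref{th:theta-rate}) for $\theta = t + \varphi/2$ is equivalent to $\sup_{t \geq t_0} t^{\ep_0}|\varphi(t)| < +\infty$. Moreover, every solution of this ODE satisfies the universal bound $|\varphi'(t)| \leq (m+r)/t$, so Lemma~\ref{lemma:osc-int} applied with $h := \varphi$, $K_0 := m+r$, $\alpha = 1$, and $n \in \{2, 4\}$ guarantees that $\int_{t_0}^{+\infty} a(t)\, dt$ converges. Lemma~\ref{lemma:phi2k}(1) then yields $\varphi(t) \to k\pi$ as $t \to +\infty$, for some integer $k$ depending on the solution.

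Next I would select a solution with $k = 0$ via the $2\pi$-invariance of the equation. Denoting by $\varphi_\xi$ the unique solution with $\varphi_\xi(t_0) = \xi$, uniqueness forces $\varphi_{\xi+2\pi} = \varphi_\xi + 2\pi$, so the limiting integer satisfies $k(\xi + 2\pi) = k(\xi) + 2$. Consequently, any $\xi_0 \in \re$ with $k(\xi_0)$ even yields $\xi^* := \xi_0 - \pi k(\xi_0) \in \xi_0 + 2\pi\z$ with $k(\xi^*) = 0$. The remaining point is to exhibit some $\xi_0$ with $k(\xi_0)$ even: setting $\eta := \varphi - (2j+1)\pi$ and using $\sin(x + (2j+1)\pi) = -\sin x$, one finds that $\eta$ satisfies the same ODE as $\varphi$ but with the bracket multiplied by $-1$, so its linearization near $\eta = 0$ has the \emph{positive} coefficient $r/(2t)$ on the linear term, making every odd-multiple equilibrium unstable. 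A standard stable-manifold-type argument then shows that the set $\{\xi \in \re : k(\xi) = 2j+1\}$ is discrete for each $j \in \z$, so the set of $\xi_0$ with $k(\xi_0)$ even has full Lebesgue measure and is in particular nonempty.

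The last step is to upgrade $\varphi^*(t) := \varphi_{\xi^*}(t) \to 0$ to the quantitative rate~(\ref{th:theta-rate}). Fix $\ep_0 \in (0, \min\{r/2, 1\})$ and apply Lemma~\ref{lemma:osc-int} once more, this time with $\alpha := 1 - \ep_0 \in (0, 1)$ and the same $h = \varphi^*$ (still satisfying $|h'| \leq (m+r)/t$); this yields convergence of $\int_{t_0}^{+\infty} t^{\ep_0} a^*(t)\, dt$, where $a^*$ is the function from the first step attached to $\varphi^*$. Since $k = 0$ is even, all hypotheses of Lemma~\ref{lemma:phi2k}(2) are met, giving $\sup\{t^{\ep_0}|\varphi^*(t)| : t \geq t_0\} < +\infty$, which via $\theta^*(t) := t + \varphi^*(t)/2$ is exactly~(\ref{th:theta-rate}) with the constant halved. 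The main obstacle of the whole strategy is the selection step above: rigorously ruling out the exceptional case in which every initial datum leads to an odd multiple of $\pi$, which I expect to be the technical heart of the existence argument.
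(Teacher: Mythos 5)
Your overall strategy matches the paper's: pass to $\varphi$ related linearly to $\theta - t$, rewrite (\ref{eqn:theta}) as $\varphi' = a(t) - \frac{r}{2t}\sin\varphi$, invoke Lemma~\ref{lemma:osc-int} (at exponent $\alpha=1$ and then $\alpha = 1-\ep_0$) to verify the integrability hypotheses of Lemma~\ref{lemma:phi2k}, show that $\varphi\to k\pi$, select a solution with $k$ even, normalize to $k=0$ by $2\pi$-periodicity, and apply Lemma~\ref{lemma:phi2k}(2) to get the rate. Your computations in the change of variables, the universal bound $|\varphi'|\le (m+r)/t$, and the final conversion $|\theta-t| = |\varphi|/2$ are all correct (you use $\varphi := 2\theta-2t$ while the paper uses $\varphi := 2t-2\theta$, a harmless sign flip).

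The genuine gap is the step you yourself flag: showing that not every initial datum leads to an odd multiple of $\pi$. You invoke ``a standard stable-manifold-type argument'' from the observation that odd multiples of $\pi$ are repelling under the $\frac{r}{2t}\sin\varphi$ term. This is the right heuristic, but standard stable-manifold theorems do not apply directly here: the repelling coefficient $r/(2t)$ decays to zero, so there is no uniform hyperbolicity, and the non-autonomous forcing $a(t)$ is itself of size $O(1/t)$, i.e.\ \emph{not} lower order with respect to the linearization. What rescues the argument is the combination of two facts: $\int r/(2t)\,dt$ diverges logarithmically, and the oscillatory contributions to the difference of two solutions have a \emph{bounded} integral (again by Lemma~\ref{lemma:osc-int}). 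The paper proves this directly in Step~2: assuming two distinct solutions $\varphi_1>\varphi_2$ both tend to the same odd $k\pi$, it sets $y := (\varphi_1-\varphi_2)/2$ and derives, for $t$ large, the differential inequality $y'(t) \ge \bigl(R_{\varphi_1,\varphi_2}(t) + r/4\bigr)\,\frac{y(t)}{2t}$ with $R$ a bounded sum of terms of the form $\cos(nt+h(t))$. Integrating, and using Lemma~\ref{lemma:osc-int} to bound $\int R/(2s)\,ds$, yields $y(t)\ge y(t_1)\,\exp\bigl(O(1) + \tfrac{r}{8}\log(t/t_1)\bigr)\to+\infty$, contradicting $y\to 0$. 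This proves at most one solution (hence a countable set of initial data) per odd $k$, which is all you need. Your sketch would require exactly this kind of explicit estimate to become a proof; without it, ``discreteness of the odd set'' is asserted rather than established.
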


\begin{proof}

Let us write the solution in the form
\begin{equation}
    \theta(t)=t-\frac{1}{2}\varphi(t)
    \qquad
    \forall t\geq t_0
    \nonumber
\end{equation}
for a suitable function $\varphi:[t_0,+\infty)\to\re$. Then one can show that $\theta$ solves (\ref{eqn:theta}) if and only if $\varphi$ solves
\begin{equation}
    \varphi'(t)=\frac{m+r\cos(2t)}{t}\cdot\sin(2t-\varphi(t))
    \qquad
    \forall t\geq t_0,
    \label{eqn:phi}
\end{equation}
and therefore thesis is equivalent to showing the existence of a solution to (\ref{eqn:phi}) such that
\begin{equation}
    \sup\left\{t^{\ep_0}|\varphi(t)|:t\geq t_0\strut\right\}<+\infty
    \qquad
    \forall\ep_0\in(0,\min\{r/2,1\}).
    \nonumber
\end{equation}

\paragraph{\textmd{\textit{Step 1 -- Existence of the limit}}}

We show that for every solution to (\ref{eqn:phi}) there exists an integer $k$ such that
\begin{equation}
    \lim_{t\to +\infty}\varphi(t)=k\pi.
    \label{th:phi2k}
\end{equation}

To this end, with some trigonometry we rewrite (\ref{eqn:phi}) in the form
\begin{equation}
    \varphi'(t)=a_\varphi(t)-\frac{r}{2}\cdot\frac{\sin(\varphi(t))}{t}
    \qquad
    \forall t\geq t_0,
    \label{eqn:varphi}
\end{equation}
where
\begin{equation}
    a_\varphi(t):=\frac{m}{t}\sin(2t-\varphi(t))+\frac{r}{2t}\sin(4t-\varphi(t))
    \qquad
    \forall t\geq t_0.
    \label{defn:a-phi}
\end{equation}

From (\ref{eqn:phi}) it follows that $|\varphi'(t)|\leq (m+r)/t$, and hence from Lemma~\ref{lemma:osc-int} we deduce that the integral
\begin{equation}
    \int_{t_0}^{+\infty}t^{\ep_0}a_\varphi(t)\,dt   
    \label{th:int-a-ep0}
\end{equation}
is convergent for every $\ep_0\in[0,1)$.

At this point (\ref{th:phi2k}) follows from statement~(1) of Lemma~\ref{lemma:phi2k}.

\paragraph{\textmd{\textit{Step 2 -- Uniqueness of solutions with odd $k$ in the limit}}}

We show that, for every \emph{odd} integer $k$, the exists at most one solution to (\ref{eqn:phi}) that satisfies (\ref{th:phi2k}).

Indeed, let us assume by contradiction that there exist two distinct solutions $\varphi_1$ and $\varphi_2$ with this property. Since solutions cannot cross, we can assume, without loss of generality, that
\begin{equation}
    \varphi_1(t)>\varphi_2(t)
    \qquad
    \forall t\geq t_0.
    \label{hp:phi1>phi2}
\end{equation}

If we set
\begin{equation}
    y(t):=\frac{\varphi_1(t)-\varphi_2(t)}{2}
    \qquad
    \forall t\geq t_0,
    \nonumber
\end{equation}
then with some trigonometry we obtain that
\begin{equation}
    y'(t)=
    \left(R_{\varphi_1,\varphi_2}(t)-
    r\cos\left(\frac{\varphi_1(t)+\varphi_2(t)}{2}\right)\right)
    \frac{\sin(y(t))}{2t}
    \qquad
    \forall t\geq t_0,
    \nonumber
\end{equation}
where
\begin{equation}
    R_{\varphi_1,\varphi_2}(t):=
    -2m\cos\left(2t-\frac{\varphi_1(t)+\varphi_2(t)}{2}\right)
    -r\cos\left(4t-\frac{\varphi_1(t)+\varphi_2(t)}{2}\right).
    \label{defn:R-phi}
\end{equation}

Now we know that $\varphi_1(t)$ and $\varphi_2(t)$ tend to the same limit $k\pi$ as $t\to +\infty$, with $k$ odd, and in particular
\begin{equation}
    \lim_{t\to +\infty}\cos\left(\frac{\varphi_1(t)+\varphi_2(t)}{2}\right)=
    \cos(k\pi)=-1
    \qquad\text{and}\qquad
    \lim_{t\to +\infty}y(t)=0.
    \label{th:lim-cos-y}
\end{equation}

Taking (\ref{hp:phi1>phi2}) into account, from the latter we deduce that $\sin(y(t))>0$ for every large enough $t$, and we can write it in the form
\begin{equation}
    \sin(y(t))=y(t)\left(1-\gamma(t)\right),
    \label{eqn:sin-gamma}
\end{equation}
for a suitable function $\gamma$ such that
\begin{equation}
    \lim_{t\to+\infty}\gamma(t)=0.
    \label{eqn:lim-gamma-new}
\end{equation}

From (\ref{th:lim-cos-y}), (\ref{eqn:lim-gamma-new}), and the trivial bound $|R_{\varphi_1,\varphi_2}(t)|\leq 2m+r$ for every $t\geq t_0$, we obtain that there exists $t_1\geq t_0$ such that
\begin{equation}
    \cos\left(\frac{\varphi_1(t)+\varphi_2(t)}{2}\right)\leq -\frac{1}{2}
    \qquad\quad\text{and}\quad\qquad
    \left|\gamma(t)\left(R_{\varphi_1,\varphi_2}(t)+\frac{r}{2}\right)\right|\leq\frac{r}{4}
    \label{est:t>t1}
\end{equation}
for every $t\geq t_1$. From (\ref{eqn:sin-gamma}) and (\ref{est:t>t1}) we deduce that
\begin{equation}
    y'(t) \geq
    \left(R_{\varphi_1,\varphi_2}(t)+\frac{r}{2}\right)\cdot\frac{\sin(y(t))}{2t}\geq
    \left(R_{\varphi_1,\varphi_2}(t)+\frac{r}{4}\right)\frac{y(t)}{2t}
    \qquad
    \forall t\geq t_1.
    \nonumber
\end{equation}

Integrating this differential inequality we conclude that
\begin{equation}
    y(t)\geq y(t_1)
    \exp\left(\int_{t_1}^t\frac{R_{\varphi_1,\varphi_2}(s)}{2s}\,ds
    +\frac{r}{8}\log\left(\frac{t}{t_1}\right)\right)
    \qquad
    \forall t\geq t_1.
    \nonumber
\end{equation}

Now from Lemma~\ref{lemma:osc-int} we know that the integral in the exponential is convergent when $t\to +\infty$, and therefore this estimate implies that $y(t)\to +\infty$ as $t\to +\infty$, which contradicts the second limit in (\ref{th:lim-cos-y}).

\paragraph{\textmd{\textit{Step 3 -- Existence of solutions vanishing with the correct rate}}}

From the Step~1 we know that all solutions tend to $k\pi$, as $t\to +\infty$, for some integer $k$. From Step~2 we know that the set of solutions for which $k$ is odd is at most countable. As a consequence, the exists at least one solution (and actually infinitely many of them) for which the limit is of the form $k_0 \pi$ for some \emph{even} integer $k_0$. If $\varphi(t)$ is any such solution, it is enough to observe that $\varphi(t)-k_0\pi$ is again a solution, and tends to~0 as $t\to +\infty$.

Now we focus on this special solution, and as in Step~1 we observe that it solves (\ref{eqn:varphi}) with a forcing term $a_\varphi$ given by (\ref{defn:a-phi}) for which the integral (\ref{th:int-a-ep0}) is convergent for every $\ep_0\in(0,1)$. At this point (\ref{th:theta-rate}) follows from statement~(2) of Lemma~\ref{lemma:phi2k}.
\end{proof}


\subsubsection*{Proof of Proposition~\ref{prop:main}}

As explained at the end of the introduction, it is enough to show that equation (\ref{eqn:theta}) has at least one solution $\theta(t)$ such that
\begin{equation}
    -\frac{m+r\cos(2t)}{t}\cdot 2\sin^2(\theta(t))=
    -\left(m-\frac{r}{2}\right)\frac{1}{t}+\psi(t)
    \label{integrand}
\end{equation}
for a suitable function $\psi(t)$ whose integral over $[t_0,+\infty)$ is convergent. To this end, we observe that with some trigonometry the integrand can be written as 
\begin{eqnarray*}
 -\frac{m+r\cos(2t)}{t}\cdot 2\sin^2(\theta(t)) & = &
-\frac{m+r\cos(2t)}{t}\cdot(1-\cos(2\theta(t)))
\\[0.5ex]
& = &
-\left(m-\frac{r}{2}\right)\frac{1}{t}-\frac{r\cos(2t)}{t}
\\[0.5ex]
&  &
\mbox{}+\frac{m\cos(2\theta(t))}{t}
+\frac{r}{2}\cdot\frac{\cos(2t+2\theta(t))}{t}
\\[0.5ex]
&  &
\mbox{}+\frac{r}{2}\cdot\frac{\cos(2\theta(t)-2t)-1}{t},   
\end{eqnarray*}
and we call $\psi_1(t)$, \ldots, $\psi_4(t)$ the last four functions in the right-hand side. Comparing with (\ref{integrand}), we have to show that the integrals over $[t_0,+\infty)$ of these four functions are convergent. This is immediate in the case of $\psi_1(t)$. As for $\psi_2(t)$ and $\psi_3(t)$, we observe that every solution to (\ref{eqn:theta}) can be written in the form $\theta(t)=t+h(t)$ for a suitable function $h(t)$ such that $|h'(t)|\leq (m+r)/t$, and hence the convergence of the integrals of $\psi_2(t)$ and $\psi_3(t)$ follows from Lemma~\ref{lemma:osc-int}. As for $\psi_4(t)$, from the Lipschitz continuity of the cosine we obtain that
\begin{equation}
    \frac{|\cos(2\theta(t)-2t)-1|}{t}\leq\frac{2|\theta(t)-t|}{t}
    \qquad
    \forall t\geq t_0.
    \nonumber
\end{equation}

Therefore, if we consider the special solution to (\ref{eqn:theta}) provided by Proposition~\ref{prop:theta}, from (\ref{th:theta-rate}) we know that $|\theta(t)-t|$ decays at least as some positive power of $t$, which is enough to conclude that the integral of $\psi_4(t)$ is even absolutely convergent.
\qed

\begin{rmk}[The isolated fast solution]
\begin{em}

Although not necessary for the proof of our main result, we can further analyze the asymptotic behavior of solutions to (\ref{eqn:ode}). In particular, there exists a real number $\theta_0\in[0,2\pi)$ with the following property.
\begin{itemize}
    \item For every positive initial energy $E_0^2$, the solution to (\ref{eqn:ode}) with initial data 
    \begin{equation}
        v(t_0)=E_0\cos(\theta_0),
        \qquad\qquad
        v'(t_0)=-E_0\sin(\theta_0)
    \nonumber
    \end{equation}
    satisfies
    \begin{equation}
        \lim_{t\to +\infty}\left(v'(t)^2+v(t)^2\right)t^{m+r/2}<+\infty.
    \nonumber
    \end{equation}

    \item For every positive initial energy $E_0^2$, and for every $\theta_1\in[0,2\pi)$ with $\theta_1\neq\theta_0$, the solution to (\ref{eqn:ode}) with initial data 
    \begin{equation}
        v(t_0)=E_0\cos(\theta_1),
        \qquad\qquad
        v'(t_0)=-E_0\sin(\theta_1)
    \nonumber
    \end{equation}
    satisfies
    \begin{equation}
        \lim_{t\to +\infty}\left(v'(t)^2+v(t)^2\right)t^{m-r/2}>0.
    \nonumber
    \end{equation}

\end{itemize}

In other words, for every positive initial energy, there is exactly one solution that exhibits a ``fast decay'' of order $t^{m+r/2}$, while all remaining solutions decay ``slowly'' at a rate of $t^{m-r/2}$. The fast-decaying solution corresponds to the unstable solutions of (\ref{eqn:phi}), namely those tending to odd multiples of $\pi$, and for them resonance acts ``in favor'' of the decay rate.

\end{em}
\end{rmk}


\setcounter{equation}{0}
\section{Possible generalizations}\label{sec:extensions}

At this point, one may ask whether, or under what assumptions, the degradation of the decay rate observed in Theorem~\ref{thm:main} can also occur for more general equations of the form (\ref{eqn:abstract}). Let us discuss some possible scenarios.  

\paragraph{\textmd{\textit{More general operators}}}  

Consider the abstract equation (\ref{eqn:abstract}) with the same damping coefficient as in (\ref{eqn:basic}). Then, the conclusion of Theorem~\ref{thm:main} remains valid, provided that \( \lambda = 1 \) belongs to the spectrum of \( A \). Moreover, if \( \lambda = 1 \) is also an eigenvalue of \( A \), then we can strengthen the conclusion by stating that there exist a positive constant $C_3(t_0,m,r)$ and a nonzero solution satisfying  
\begin{equation}
    \mathcal{E}_u(t) \geq \mathcal{E}_u(t_0)\cdot\frac{C_3(t_0,m,r)}{t^{m - r/2}}   
    \qquad  
    \forall t \geq t_0.
    \nonumber
\end{equation}  

\paragraph{\textmd{\textit{Different frequencies}}}

The result also holds for damping coefficients of the form  
\begin{equation}
    b(t) = \frac{m + r\cos(2\lambda_0 t)}{t},
    \nonumber
\end{equation}
provided that \( \lambda_0 > 0 \) belongs to the spectrum of \( A \). In this case, we must consider equation (\ref{eqn:ode-b(t)}) with \( \lambda = \lambda_0 \) instead of \( \lambda = 1 \). The key point is that, when we introduce polar coordinates, the equation for $\theta$ has a solution of the form \( \theta(t) = \lambda_0 t + o(1) \), which implies that in (\ref{eqn:rho}) the term \( \sin^2(\theta(t)) \) resonates again with \( \cos(2\lambda_0 t) \).  

\paragraph{\textmd{\textit{Adding a phase}}}  

The result remains valid if we replace \( \cos(2t) \) with \( \cos(2t + \alpha_0) \) for some constant \( \alpha_0 \). Indeed, by applying a time translation, this is equivalent to considering the damping coefficient  
\begin{equation}
    b(t) = \frac{m + r\cos(2t)}{t - \alpha_0/2},
    \nonumber
\end{equation}
for which the proof remains essentially unchanged. Note that when \( \alpha_0 = 3\pi/2 \) the oscillatory term transforms into \( \sin(2t) \), and that $\alpha_0=\pi$ is equivalent to switching the sign of $r$.  

\paragraph{\textmd{\textit{Adding a non-resonant term}}}

Finally, we consider damping coefficients of the form  
\begin{equation}
    b(t) = \frac{m + r\cos(2t) + \delta(t)}{t},
    \nonumber
\end{equation}
where \( \delta \in L^1_{\text{loc}}(\mathbb{R}) \) is a periodic function with period \( k\pi \) for some positive integer \( k \), and satisfying  
\begin{equation}
    \int_0^{k\pi} \delta(t)\, dt =
    \int_0^{k\pi} \delta(t)\cos(2t)\, dt =
    \int_0^{k\pi} \delta(t)\sin(2t)\, dt =
    0.
    \label{hp:non-resonance}
\end{equation}

Following the heuristic argument presented in the introduction, these conditions suggest that the presence of \( \delta(t) \) does not significantly affect the key integrals that determine the decay rate of solutions. To make this argument rigorous, one would need to refine the proof of Proposition~\ref{prop:theta}, carefully incorporating the additional terms involving \( \delta(t) \) that appear in both the definition (\ref{defn:a-phi}) of \( a_\varphi \) and the definition (\ref{defn:R-phi}) of \( R_{\varphi_1,\varphi_2} \), as well as in the expansion of the integrand in the left-hand side of (\ref{integrand}). However, due to the non-resonance conditions in (\ref{hp:non-resonance}), all these terms remain integrable over \( [t_0,+\infty) \), rendering their contribution negligible. We leave the details to the interested reader.

\paragraph{\textmd{\textit{General periodic function}}}

Let us now consider any function \( \beta \in L^1_{\text{loc}}(\mathbb{R}) \) that is periodic with some period \( p_0 > 0 \), and let us take the damping coefficient (\ref{defn:b-gamma}). As in the model case, the integral of \( \beta \) is asymptotically equivalent to \( m \log t \), where \( m \) is the average of \( \beta \) over \( (0, p_0) \). Therefore, according to (\ref{decay:hyp}), one might expect a decay rate of solutions proportional to \( t^{-m} \), at least when \( m \in (0,2) \).  

On the other hand, if \( \beta \) is non-constant, then it has a nonzero component at some frequency that is an integer multiple of \( 2\pi/p_0 \). In particular, it can be expressed as  
\begin{equation}
    \beta(t) = m + r \cos(2\lambda_0 t + \alpha_0) + \delta(t),
    \nonumber
\end{equation}
for suitable positive real numbers \( r \), \( \lambda_0 \), \( \alpha_0 \), and a function \( \delta \) satisfying  
\begin{equation}
    \int_0^{p_0} \delta(t)\,dt =
    \int_0^{p_0} \delta(t) \cos(2\lambda_0 t)\,dt =
    \int_0^{p_0} \delta(t) \sin(2\lambda_0 t)\,dt =
    0.
    \nonumber
\end{equation}

At this point, if \( \lambda_0 \) happens to be in the spectrum of \( A \), then combining the previous observations leads to a resonance effect, implying that the decay rate is at most proportional to \( t^{-m + r/2} \). Formally, this leads to a statement of the following form.

\begin{thm}[Deterioration of the decay rate -- General case]

Let $H$ be a Hilbert space, let $A$ be a nonnegative multiplication operator on $H$, and let $\beta\in L^1_{loc}(\re)$.

Let us assume that there exists positive real numbers $\lambda_0$, and $p_0$ such that
\begin{enumerate}
\renewcommand{\labelenumi}{(\roman{enumi})}
    \item $\lambda_0$ is in the spectrum of $A$,

    \item the function $\beta$ is periodic with period $p_0$,

    \item $\lambda_0p_0$ is an integer multiple of $\pi$.

\end{enumerate}

Let us set
\begin{gather*}
    r_1:=\frac{2}{p_0}\int_0^{p_0}\beta(t)\cos(2\lambda_0 t)\,dt,
    \qquad\qquad
    r_2:=\frac{2}{p_0}\int_0^{p_0}\beta(t)\sin(2\lambda_0 t)\,dt,
    \\[1ex]
    m:=\frac{1}{p_0}\int_0^{p_0}\beta(t)\,dt,
    \qquad\qquad
    r:=\sqrt{r_1^2+r_2^2}.
\end{gather*}

Then solutions to the abstract evolution equation (\ref{eqn:abstract}), with $b(t)$ given by (\ref{defn:b-gamma}), satisfy 
\begin{equation*}
    \mathcal{D}(t)\geq\frac{C_4(t_0,\beta)}{t^{m-r/2}}
    \qquad
    \forall t\geq t_0
\end{equation*}
for a suitable positive constant $C_4(t_0,\beta)$ that depends on $t_0$ and on the function $\beta$.
    
\end{thm}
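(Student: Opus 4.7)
The plan is to reduce the statement to the setting of Proposition~\ref{prop:main} through two changes of variable, and then to extend the arguments of Section~\ref{sec:proofs} so as to accommodate the additional Fourier modes of $\beta$.

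First I would invoke Proposition~\ref{prop:ode2pde} to reduce the desired bound on $\mathcal{D}(t)$ to the construction of a single nonzero solution of (\ref{eqn:ode-b(t)}) at $\lambda=\lambda_0$ whose energy $v'(t)^{2}+\lambda_0^{2}v(t)^{2}$ is bounded below by a constant multiple of $t^{-(m-r/2)}$ times the initial energy. Next I would rescale time by setting $s=\lambda_0 t$ and $w(s):=v(s/\lambda_0)$, turning this ODE into
\[
w''(s)+\frac{\widetilde{\beta}(s)}{s}\,w'(s)+w(s)=0, \qquad \widetilde{\beta}(s):=\beta(s/\lambda_0).
\]
The hypothesis $\lambda_0 p_0\in\pi\z$ ensures that $\widetilde{\beta}$ is periodic with period $\widetilde{p}_0:=\lambda_0 p_0$, while a direct change of variables shows that the four constants $m,r_1,r_2,r$ computed from $\widetilde{\beta}$ at frequency~$1$ coincide with those computed from $\beta$ at frequency~$\lambda_0$. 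This reduces the problem to the case $\lambda_0=1$, with a general periodic $\widetilde{\beta}$ in place of $m+r\cos(2s)$.

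I would then decompose $\widetilde{\beta}(s)=m+r\cos(2s+\alpha_0)+\delta(s)$, choosing $\alpha_0$ so that $r\cos(2s+\alpha_0)$ carries exactly the Fourier coefficients $r_1,r_2$ of $\widetilde{\beta}$ at $\cos(2s)$ and $\sin(2s)$; the remainder $\delta$ is then $\widetilde{p}_0$-periodic and satisfies the three non-resonance conditions (\ref{hp:non-resonance}). By the ``adding a phase'' remark of Section~\ref{sec:extensions}, a time translation of size $\alpha_0/2$ turns the damping into $(m+r\cos(2s)+\widetilde{\delta}(s))/(s-\alpha_0/2)$, where $\widetilde{\delta}(s):=\delta(s-\alpha_0/2)$ is still $\widetilde{p}_0$-periodic and, by a direct check, still satisfies the same three non-resonance conditions. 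Since $1/(s-\alpha_0/2)-1/s=O(1/s^{2})$ is absolutely integrable at infinity, this correction contributes only bounded perturbations throughout the subsequent estimates, and I may work without loss of generality with the damping $(m+r\cos(2s)+\widetilde{\delta}(s))/s$.

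It remains to re-run the proof of Proposition~\ref{prop:main} with this augmented damping. In polar coordinates the equation for $\theta$ picks up an extra term $\widetilde{\delta}(s)\sin(2\theta(s))/(2s)$, and the substitution $\theta(s)=s-\varphi(s)/2$ appends to the forcing $a_\varphi$ of (\ref{defn:a-phi}) a summand $\widetilde{\delta}(s)\sin(2s-\varphi(s))/s$, with an analogous modification of $R_{\varphi_1,\varphi_2}$ in (\ref{defn:R-phi}). The analytic crux is an extension of Lemma~\ref{lemma:osc-int}: for every $\widetilde{p}_0$-periodic function $\delta$ satisfying the three non-resonance conditions, every $\ep_0\in[0,1)$, and every $C^1$ function $h$ with $|h'(s)|\lesssim 1/s$, the integrals
\[
\int_{t_0}^{+\infty}s^{\ep_0-1}\,\delta(s)\cos(ns+h(s))\,ds, \qquad n\in\{2,4\},
\]
should be convergent; this can be proved by expanding $\delta$ in its Fourier series at frequencies $2\pi k/\widetilde{p}_0$, noting that the non-resonance conditions eliminate exactly those harmonics which would combine with $\cos(ns)$ or $\sin(ns)$ to produce a zero-frequency contribution, and then applying Lemma~\ref{lemma:osc-int} term-by-term to the surviving harmonics, whose effective frequencies are bounded away from zero. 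With this variant in hand, the proof of Proposition~\ref{prop:theta} transfers essentially verbatim, and the expansion following (\ref{integrand}) acquires only one additional summand of the form $\widetilde{\delta}(s)(1-\cos(2\theta(s)))/s$, whose integrability on $[t_0,+\infty)$ follows by combining the new oscillatory-integral lemma with the polynomial decay $|\theta(s)-s|\lesssim s^{-\ep_0}$ provided by Proposition~\ref{prop:theta}. I expect the main obstacle to lie precisely in this analytic step: one must use the three non-resonance conditions in their full strength to ensure that no contribution coming from $\delta$ produces a hidden logarithmic factor that would degrade the exponent $m-r/2$ in the final estimate.
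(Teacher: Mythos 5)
The paper itself gives no rigorous proof of this theorem; Section~\ref{sec:extensions} merely sketches the route (reduction to a single fundamental frequency $\lambda_0$, time rescaling, phase shift, handling of the non-resonant remainder) and explicitly leaves the details to the reader. Your proposal follows this outline faithfully, and the reduction steps you describe --- rescaling $s=\lambda_0 t$, identifying the Fourier data of $\widetilde\beta$ with that of $\beta$, absorbing the phase by a time translation while noting that $1/(s-\alpha_0/2)-1/s$ is absolutely integrable --- are all correct and in the spirit of the paper.

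There is, however, a genuine error in the supplementary oscillatory-integral lemma you propose. You assert that, for every $\widetilde p_0$-periodic $\delta$ satisfying the three non-resonance conditions~(\ref{hp:non-resonance}), the integrals $\int s^{\ep_0-1}\delta(s)\cos(ns+h(s))\,ds$ converge for $n\in\{2,4\}$, and sketch a proof by Fourier expansion of $\delta$. The case $n=4$ is false: take $\delta(s)=\cos(4s)$, which satisfies all three conditions, and observe that
\begin{equation*}
    \delta(s)\cos(4s+h(s))=\tfrac12\cos(h(s))+\tfrac12\cos(8s+h(s)),
\end{equation*}
whose first term forces $\int s^{\ep_0-1}\delta(s)\cos(4s+h(s))\,ds$ to diverge for any $\ep_0>0$; the non-resonance conditions say nothing about the frequency-$4$ harmonics of $\delta$, so your ``surviving harmonics are bounded away from zero'' step breaks down for $n=4$. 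Fortunately, if you trace the modified formulae carefully you will find that $\delta$ only ever multiplies frequency-$0$ or frequency-$2$ oscillations: the extra summand in $a_\varphi$ is $\delta(s)\sin(2s-\varphi(s))/s$, the extra summand in $R_{\varphi_1,\varphi_2}$ is $-2\delta(s)\cos(2s-\cdot)$, and the extra summand in the integrand expansion following (\ref{integrand}) is $\delta(t)\bigl(\cos(2\theta(t))-1\bigr)/t$. Thus the lemma you actually need is only for $n\in\{0,2\}$, and for those values the three non-resonance conditions do exactly what you want. Once restated this way the argument goes through; moreover, for $\delta$ merely in $L^1_{\mathrm{loc}}$ it is cleaner to avoid Fourier expansion (which raises convergence questions) and instead integrate by parts against the bounded primitive of the zero-mean periodic function $\delta(s)\cos(2s)$ (respectively $\delta(s)\sin(2s)$, $\delta(s)$), which is the same mechanism underlying Lemma~\ref{lemma:osc-int}.
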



\subsubsection*{\centering Acknowledgments}

Both authors are members of the Italian {\selectlanguage{italian}%
``Gruppo Nazionale per l'Analisi Matematica, la Probabilit\`{a} e le loro Applicazioni'' (GNAMPA) of the ``Istituto Nazionale di Alta Matematica'' (INdAM)}. 

The authors acknowledge the MIUR Excellence Department Project awarded to the Department of Mathematics, University of Pisa, CUP I57G22000700001.



\begin{thebibliography}{10}
\providecommand{\url}[1]{\texttt{#1}}
\providecommand{\urlprefix}{URL }
\providecommand{\selectlanguage}[1]{\relax}
\providecommand{\eprint}[2][]{\url{#2}}

\bibitem{2023-AsymptAn-AslanEbert}
\textsc{H.~S. Aslan}, \textsc{M.~Rempel~Ebert}.
\newblock On the asymptotic behavior of the energy for evolution models with oscillating time-dependent damping.
\newblock \emph{Asymptot. Anal.} \textbf{135} (2023), no. 1-2, 185--207.

\bibitem{2024-JDE-GG}
\textsc{M.~Ghisi}, \textsc{M.~Gobbino}.
\newblock Resonance effects for linear wave equations with scale invariant oscillating damping.
\newblock \emph{Journal of Differential Equations} \textbf{408} (2024), 64--93.

\bibitem{GGH-2016-SIAM}
\textsc{M.~Ghisi}, \textsc{M.~Gobbino}, \textsc{A.~Haraux}.
\newblock The remarkable effectiveness of time-dependent damping terms for second order evolution equations.
\newblock \emph{SIAM J. Control Optim.} \textbf{54} (2016), no.~3, 1266--1294.

\bibitem{2025-JMAA-GotoHirosawa}
\textsc{K.~Goto}, \textsc{F.~Hirosawa}.
\newblock {\selectlanguage{English}On the energy decay estimate for the dissipative wave equation with very fast oscillating coefficient and smooth initial data}.
\newblock \emph{J. Math. Anal. Appl.} \textbf{548} (2025), no.~1, 36.
\newblock Id/No 129386.

\bibitem{2008-JMAA-HirosawaWirth}
\textsc{F.~Hirosawa}, \textsc{J.~Wirth}.
\newblock {$C^m$}-theory of damped wave equations with stabilisation.
\newblock \emph{J. Math. Anal. Appl.} \textbf{343} (2008), no.~2, 1022--1035.

\bibitem{1976-Kyoto-Matsumura}
\textsc{A.~Matsumura}.
\newblock On the asymptotic behavior of solutions of semi-linear wave equations.
\newblock \emph{Publ. Res. Inst. Math. Sci.} \textbf{12} (1976/77), no.~1, 169--189.

\bibitem{1977-PJA-Matsumura}
\textsc{A.~Matsumura}.
\newblock Energy decay of solutions of dissipative wave equations.
\newblock \emph{Proc. Japan Acad. Ser. A Math. Sci.} \textbf{53} (1977), no.~7, 232--236.

\bibitem{1997-JDE-Nishihara}
\textsc{K.~Nishihara}.
\newblock Asymptotic behavior of solutions of quasilinear hyperbolic equations with linear damping.
\newblock \emph{J. Differential Equations} \textbf{137} (1997), no.~2, 384--395.

\bibitem{2011-JDE-RaduTodoYord}
\textsc{P.~Radu}, \textsc{G.~Todorova}, \textsc{B.~Yordanov}.
\newblock Diffusion phenomenon in {H}ilbert spaces and applications.
\newblock \emph{J. Differential Equations} \textbf{250} (2011), no.~11, 4200--4218.

\bibitem{1980-Kyoto-Uesaka}
\textsc{H.~Uesaka}.
\newblock The total energy decay of solutions for the wave equation with a dissipative term.
\newblock \emph{J. Math. Kyoto Univ.} \textbf{20} (1980), no.~1, 57--65.

\bibitem{2020-AsymptAn-VargasDaLuz}
\textsc{E.~C. Vargas~Junior}, \textsc{C.~R. da~Luz}.
\newblock {$\sigma$}-evolution models with low regular time-dependent non-effective structural damping.
\newblock \emph{Asymptot. Anal.} \textbf{119} (2020), no. 1-2, 61--86.

\bibitem{2004-M2AS-Wirth}
\textsc{J.~Wirth}.
\newblock Solution representations for a wave equation with weak dissipation.
\newblock \emph{Math. Methods Appl. Sci.} \textbf{27} (2004), no.~1, 101--124.

\bibitem{2006-JDE-Wirth-NE}
\textsc{J.~Wirth}.
\newblock Wave equations with time-dependent dissipation. {I}. {N}on-effective dissipation.
\newblock \emph{J. Differential Equations} \textbf{222} (2006), no.~2, 487--514.

\bibitem{2007-JDE-Wirth-E}
\textsc{J.~Wirth}.
\newblock Wave equations with time-dependent dissipation. {II}. {E}ffective dissipation.
\newblock \emph{J. Differential Equations} \textbf{232} (2007), no.~1, 74--103.

\end{thebibliography}

\label{NumeroPagine}

\end{document}